\documentclass[12pt]{article}
\input epsf.tex
% Everything between the '\documentclass' and '\begin{document}'is called
% the ``preamble''.  Nothing that is actually typeset is put in this
% section; it is used for setting up global properties of the document,
% such as margins and macro definitions.

% Please do not change anything in the preamble except where directed.
% Everybody's articles will eventually be merged into a single LaTeX
% document, and this will entail pulling small snippets from your
% preamble and putting them in the main document preamble; if you change
% things around here, then it becomes more difficult to do this merging
% process.

%%%%% STANDARD PACKAGES--DO NOT MODIFY
% \usepackage is used to add external packages of useful macros.  These
% packages are:
%   amsmath:    extra mathematical formatting
%   amsthm: extra theorem environments
%   amsfonts:   additional mathematics fonts
%   amssymb:    additional mathematical symbols
%   graphicx:   for including graphics (uncomment this if you need it)
\usepackage{amsmath}
\usepackage{amsthm}
\usepackage{amsfonts}
\usepackage{amssymb}
\usepackage{graphicx}
\usepackage{latexsym}

\usepackage{amsmath,amsthm,amsfonts,amssymb}

\usepackage{epsfig}

\usepackage{amssymb}
\usepackage[all]{xy}
\xyoption{poly}
\usepackage{fancyhdr}
\usepackage{wrapfig}

%\usepackage[]{xy}

%%%%% END STANDARD PACKAGES

%%%%% THEOREM-LIKE ENVIRONMENTS--DO NOT MODIFY
% All claims (theorems, propositions, lemmas, and corollaries) will be
% numbered in the same sequence (Lemma 1.3 follows Proposition 1.2),
% and the numbering will have the form <section>.<number> (in the book,
% they will be numbered <chapter>.<section>.<number>.  Conjectures,
% claims, remarks, definitions, and examples are numbered independently
% of the claims and each other, and the numbering will have the form
% <number>, starting from the beginning of the document.  Please do not
% specify any additional theorem-like environments or change the
% behavior of the supplied environments.
\theoremstyle{plain}
\newtheorem{thm}{Theorem}[section]

\newtheorem{lem}[thm]{Lemma}
\newtheorem{cor}[thm]{Corollary}

\theoremstyle{definition}
\newtheorem{defn}{Definition}
\theoremstyle{remark}
\newtheorem{remark}{Remark}

%%%%% END THEOREM-LIKE ENVIRONMENTS

%%%%% MARGINS--DO NOT MODIFY
% Setting margins in (La)TeX is sort of a pain.  The article document class
% comes with very wide margins.  The following sets up one-inch margins
% all the way around.

\topmargin 15pt
\advance \topmargin by -\headheight
\advance \topmargin by -\headsep
\textheight 8.6in
\oddsidemargin 0pt
\evensidemargin \oddsidemargin
\marginparwidth 0.5in
\textwidth 6.5in
%%%%% END MARGINS

%\def\Aut{\textrm{Aut}}

\def\cB{{\mathcal B}}

\def\cc{{\curvearrowright}}

\def\F{{\mathbb F}}
\def\cF{{\mathcal F}}

\def\Fix{\textrm{Fix}}

\def\N{{\mathbb N}}

\def\R{{\mathbb R}}

\def\cR{\mathcal R}

\def\chix{{\raise.5ex\hbox{$\chi$}}}

\def\Z{{\mathbb Z}}

\newcommand\Aut{\operatorname{Aut}}

\newcommand\Hom{\operatorname{Hom}}

\newcommand\IRS{\operatorname{IRS}}

\newcommand\res{\upharpoonright}
\newcommand\RG{{\operatorname{RG}}}

\newcommand\Stab{\operatorname{Stab}}
\newcommand\Sub{\operatorname{Sub}}

\newcommand\SC{\operatorname{SC}}
\newcommand\Sch{\operatorname{Sch}}

\begin{document}
\title{Invariant random subgroups of the free group}
\author{Lewis Bowen\footnote{supported in part by NSF grant DMS-0968762, NSF CAREER Award DMS-0954606 and BSF grant 2008274} \\ University of Texas at Austin}
%\author{University of Hawaii}
%\author[Lewis Bowen]{Lewis Bowen$\dagger$}
%\address{Department of Mathematics\\
%University of Hawai'i--Manoa\\
%} %one \address command per author
%\email{lpbowen@math.hawaii.edu}
%%\thanks{$\dagger$ Supported in part by NSF grants DMS-??.}
\maketitle
\begin{abstract}
Let $G$ be a locally compact group. A random closed subgroup with conjugation-invariant law is called an {\em invariant random subgroup} or IRS for short. We show that each nonabelian free group has a large ``zoo'' of IRS's. This contrasts with results of Stuck-Zimmer which show that there are no non-obvious IRS's of higher rank semisimple Lie groups with property (T).
\end{abstract}

\noindent
{\bf Keywords}: Invariant random subgroups, ergodic equivalence relations\\
{\bf MSC}:37A15, 37A20 \\

%\noindent
\tableofcontents

\section{Introduction}

Let $G$ be a locally compact group. A random closed subgroup with conjugation-invariant law is called an {\em invariant random subgroup} or IRS for short. These objects arise naturally from the study of group actions. Indeed, suppose $G$ acts on a standard Borel space $X$. For $x\in X$, let $\Stab(x)=\{g \in G:~ gx=x\}$. If $\mu$ is a $G$-invariant probability measure on $X$ and $x\in X$ is chosen at random with law $\mu$ then $\Stab(x)$ is an IRS. By  \cite[Proposition 13]{AGV12} every IRS occurs from this type of construction.

There has been a recent increase in studies of the action of $G$ on its space of subgroups \cite{AGV12, Vo12, AB+11, Ve11, Sa11, Gr11, Ve10, Bo10, BS06, DS02, GS99, SZ94}. Perhaps the deepest result in the subject is the Stuck-Zimmer Theorem \cite{SZ94}: there are no non-obvious IRS's of higher rank simple Lie groups. More precisely, every ergodic IRS is induced from a lattice subgroup. Another nice result in this area is a complete classification IRS's of the infinite symmetric group due to A. Vershik \cite{Ve11}. Our main goal is to show that, by contrast, there is a large zoo of IRS's of any finitely generated nonabelian free group. 

There are four main results of this paper: (1) the space of laws of totally non-free IRS's is residual in the space of all laws of IRS's,  (2)  the space of laws of ergodic infinite index IRS's of the free group is homeomorphic to Hilbert space, (3) every ergodic aperiodic pmp equivalence relation of cost $<r$ is represented by the action of $\F_r$ on its space of subgroups and (4) that every symbolic action of $\F_r$ can be encoded as a sub-action of $\F_r$ on its space of subgroups.

To state these results precisely, we need to introduce some notation. Let $\Sub(G)$ denote the space of all closed subgroups of a locally compact group $G$ with the topology of uniform convergence on compact sets. $G$ acts on $\Sub(G)$ by $g \cdot H := gHg^{-1}$ and $\IRS(G)$ denotes the space of all conjugation-invariant Borel probability measures on $\Sub(G)$. We always consider $\IRS(G)$ with the weak* topology. We let $\IRS_i(G)$ denote those measures $\nu \in \IRS(G)$ such that $\nu$-a.e. $K \in \Sub(G)$ has infinite index.

\subsection{Totally non-free actions}
Let $G \cc (X,\cB,\mu)$ be a probability-measure-preserving (pmp) action. For any Borel set $H \subset G$, let $\Fix(H)=\{x \in X:~ hx =x ~\forall h \in H\}$. A probability measure preserving action $G \cc (X,\cB,\mu)$ is {\em totally non-free} if the collection $\{\Fix(H):~ H $ a Borel subset of $G\} \subset \cB$ generates the Borel sigma-algebra $\cB$ (up to sets of measure zero). The study of such actions was initiated in \cite{Ve11} where a classification of such actions is called for. Our first result:\\

\noindent {\bf Corollary \ref{cor:tnf}}
{\em The set of measures $\mu \in \IRS_{i}(\F_r)$ such that $\F_r \cc (\Sub(\F_r),\mu)$ is totally non-free is residual in $\IRS_{i}(\F_r)$.}

\subsection{The simplex $\IRS_i(G)$}

%\noindent {\bf Problem 1}. Describe $\IRS(G)$ up to affine homeomorphism.
Recall  that a convex closed metrizable subset $K$ of a locally convex linear
space is a {\em simplex} if each point in $K$ is the barycenter of a unique probability
measure supported on the subset $\partial_e K$ of extreme points of  $K$. In this case, $K$ is called a {\em Poulsen simplex} if $\partial_e K$ is dense in $K$. It is called a {\em Bauer simplex} if $\partial_e K$ is closed.

For example, an old result states that the space of all shift-invariant Borel probability measures on $\{0,1\}^\Z$ is a Poulsen simplex \cite[page 45]{Ol80}.  By \cite{LOS78} there is a unique Poulsen simplex up to affine homeomorphism and its set of extreme points is homeomorphic to the Hilbert space $l^2$. On the other hand, there are uncountably many non-isomorphic Bauer simplices. For example, let $X$ be any compact metrizable space. Then the space $P(X)$ of all Borel probability measures on $X$ is a Bauer simplex and $\partial_e P(X)$ is homeomorphic to $X$.

Let $\IRS_e(G) \subset \IRS(G)$ denote the ergodic measures and $\IRS_{fi}(G) \subset \IRS_e(G)$ those measures which are supported on the conjugacy class of a single finite-index subgroup. If $G$ is finitely generated, then each of its finite-index subgroups are finitely generated and because of this, each finite-index subgroup is isolated in the space of subgroups. This implies each element of $\IRS_{fi}(G)$ is isolated in $\IRS_e(G)$. In particular, $\IRS_e(G)$ cannot be connected if $G$ has proper finite-index subgroups.

Let $\IRS_{ie}(G) :=\IRS_e(G) \setminus \IRS_{fi}(G)$ and $\IRS_{i}(G)$ denote the closed convex hull of $\IRS_{ie}(G)$ (this agrees with our previous definition). Our second result:\\ %Recall that a {\em Poulsen simplex} is any simplex whose extreme points are dense.

\noindent {\bf Theorem \ref{thm:topology}}.
{\em If $G$ is a nonabelian free group, then $\IRS_{i}(G)$ is a Poulsen simplex. In particular, $\IRS_{ie}(G)$ is a dense $G_\delta$ subset of $\IRS_i(G)$ and $\IRS_{ie}(G)$ is homeomorphic to the Hilbert space $l^2$. }

%\begin{remark}
%In the course of proving this result, we show that the set of measures $\mu \in \IRS_{i}(G)$ such that $G \cc (\Sub(G), \mu)$ is totally non-free is residual in $\IRS_{i}(G)$. The study of totally non-free actions was introduced by A. Vershik \cite{Ve10}.
%\end{remark}

\begin{remark}
In order to discuss the case of locally compact groups, let $\IRS_c(G) \subset \IRS_e(G)$ be the set of measures which are supported on the conjugacy class of a single cocompact subgroup. Let $\IRS_{nce} = \IRS_e(G) \setminus \IRS_c(G)$ and $\IRS_{nc}(G)$ be the closed convex hull of $\IRS_{nce}(G)$. Using the ideas of the proof of Theorem \ref{thm:topology}, it can be shown that $\IRS_{nc}(PSL_2(\R))$ is also a Poulsen simplex. By constract, the main results of \cite{GW97} imply that if $G$ has property (T) then $\IRS_{nc}(G)$ and $\IRS(G)$ are Bauer simplices.
\end{remark}

\begin{remark}
Grigorchuk's {\em space of $r$-generated marked groups} \cite{Gr84} is (canonically isomorphic with) the space of normal subgroups of the free group of rank $r$ with the topology of uniform convergence on compact subsets (also known as the Chabauty topology). It can be viewed as a subspace of $\IRS_e(\F_r)$ (where $\F_r$ denotes the free group of rank $r$). Namely, for each normal subgroup $N$, let $\delta_N$ be the Dirac probability measure supported on $\{N\}$. Then $\{\delta_N:~N\vartriangleleft \F_r\} \subset \IRS_e(\F_r)$ is a copy of the space of $r$-generated marked groups. It is known that the space of marked groups contains interesting isolated points \cite{CGP07}. In particular, there are infinite index normal subgroups which are isolated in the space of all normal subgroups. However, these points are {\em not } isolated in $\IRS_e(\F_r)$ because $\IRS_{ie}(\F_r)$ is pathwise connected by Theorem \ref{thm:topology}.

\end{remark}

\subsection{Measured equivalence relations}
To explain the next result, we need to recall some notions from the theory of measured equivalence relations. So let $(X,\mu)$ be a standard Borel probability space and $E \subset X \times X$ be a discrete Borel equivalence relation such that $\mu$ is $E$-invariant (i.e., if $\phi:X \to X$ is a Borel automorphism whose graph is contained in $E$ then $\phi_*\mu=\mu$). We refer to the triple $(X,\mu,E)$ as a {\em discrete probability-measure-preserving (pmp) equivalence relation}. Two discrete pmp equivalence relations $(X_i,\mu_i,E_i)$ (for $i=1,2$) are {\em isomorphic} if there exist conull sets $X'_i \subset X_i$ and a measure-space isomorphism $\phi:(X'_1,\mu_1)\to (X'_2,\mu_2)$ such that $(x,y) \in E_1 \Leftrightarrow (\phi(x),\phi(y)) \in E_2$. In particular, we only require that $\phi$ is defined on a set of full measure. 
 
%Given a measure-preserving action of a countable group $\Gamma$ on a probability space $(X,\mu)$ we denote the orbit-equivalence relation by $E_\Gamma:=\{(x,\gamma x):~ x\in X, \gamma \in \Gamma\}$.  
Let $E_G$ denote the orbit equivalence relation on $\Sub(G)$: $E_G:=\{ (K, gKg^{-1}):~ K \in \Sub(G), g\in G\}$. Our third main result:\\

\noindent {\bf Corollary \ref{cor:universal}}.
{\em If $G$ is a free group of rank $r$ and $(X,\mu,E)$ is an ergodic aperiodic discrete pmp equivalence relation with cost(E) $<r$ then there exists an invariant measure $\lambda \in \IRS(G)$ such that $(\Sub(G), \lambda, E_G)$ is isomorphic to $(X,\mu,E)$. Moreover, there is an action $G \cc X$ such that $E=\{(x,gx):~x\in X, g\in G\}$ is the orbit-equivalence relation and if $\Stab:X \to \Sub(G)$ is the stabilizer map $\Stab(x)=\{g\in G:~gx=x\}$ then $\Stab$ is a measure-conjugacy from $G\cc (X,\mu)$ to $G\cc (\Sub(G),\lambda)$.}
%\end{thm}

\begin{remark}
The authors of \cite{IKT09}  and I. Epstein proved that if $G$ is any nonamenable group then it is impossible to classify up to countable structures the free mixing probability measure-preserving actions of $G$ up to orbit-equivalence. Note that if $G$ has a generating set with less than $r$ generators then all of its orbit equivalence relations have cost $<r$.

% Intuitively this implies that if $r\ge 2$ then the space of equivalence relations of cost $\le r$ is so rich that no classification can ever be obtained.
\end{remark}

\begin{remark}
A well-known result of \cite{FM77} states that any discrete pmp equivalence relation $(X,\mu,E)$ can be realized as the orbit-equivalence relation for the action of a countable group $G$. In other words, given $(X,\mu,E)$ there is a countable group $G$ and a measure-preserving action $G \cc (X,\mu)$ such that $(x,y) \in E \Leftrightarrow \exists g\in G$ such that $gx=y$. The authors then asked whether there exists a group $G$ and an {\em essentially free} action $G \cc (X,\mu)$ generating $E$. The negative answer to this question was obtained by Furman \cite{Fu99}. By contrast, Corollary \ref{cor:universal} implies that if cost$(E)<r$ then there is a {\em totally non-free} action of $\F_r$, the free group of rank $r$, generating the equivalence relation. 
\end{remark}

\subsection{Symbolic dynamics}
Finally, we provide a general method for constructing invariant subspaces and measures on $\Sub(\F_r)$ from symbolic actions of $\F_r$ (where $\F_r$ is the rank $r$ free group). Briefly the result states the following. Let $\F'_r$ be the subgroup of $\F_r$ generated by $\{s_1^2,s_2,\ldots, s_r\}$. If $K$ is any finite set and $X \subset K^{\F_r}$ any shift-invariant closed subset then there exists an $\F_r$-invariant closed subset $Y \subset \Sub(\F_r)$ and an $\F'_r$-invariant closed subset $Z \subset Y$ such that $\F_r \cc X$ is topologically conjugate to $\F'_r \cc Z$ via the obvious isomorphism from $\F_r$ to $\F'_r$. Moreover, $Y$ is covered by finitely many translates of $Z$. In this sense, we can embed the dynamics of any symbolic action into the dynamics of the conjugation-action on a space of subgroups. See Theorem \ref{thm:encoding} for complete details.

{\bf Acknowledgements}. Thanks to Miklos Abert, Yair Glasner, Eli Glasner and Anatoly Vershik for stimulating questions on this topic and to Rostislav Grigorchuk for helpful comments. Thanks to Benjy Weiss for pointing out errors in a previous version. I'm grateful to the anonymous reviewers for catching several errors. Also thanks to Alessandro Carderi for catching a mistake in a previous version.

%\subsection{Organization}
%In \S \ref{sec:TNF}, we show study totally non-free actions and IRS's which are equal to their normalizers. This is used in \S \ref{sec:topology} to establish Theorem \ref{thm:topology}. In \S \ref{sec:

\section{Preliminaries}

\subsection{Schreier coset graphs}\label{sec:Schreier}
Let $L$ be a finite set. A {\em rooted $L$-labeled digraph} consists of a graph whose edges are labeled with labels in $L$ and a distinguished vertex called the {\em root}. Some of the edges may be directed while others may be undirected. We allow loops and multiple edges. Two rooted $L$-labeled digraphs $\Gamma,\Gamma'$ are {\em isomorphic} if there exists a graph isomorphism between them which preserves directions and labels. If such an isomorphism also preserves roots then we say $\Gamma$ and $\Gamma'$ are {\em root-isomorphic}.

Let $\RG(L,d)$ denote the set of all root-isomorphism classes of connected rooted $L$-labeled digraphs such that every vertex has degree at most $d$. This is a compact metrizable space: indeed, given $\Gamma,\Gamma' \in \RG(L,d)$ we let $d(\Gamma,\Gamma') = \frac{1}{n+1}$ where $n$ is the smallest nonnegative integer such that the ball of radius $n$ centered at the root in $\Gamma$ is root-isomorphic to the ball of radius $n$ centered at the root in $\Gamma'$ (as rooted labeled digraphs). This distance function makes $\RG(L,d)$ into a compact metric space.

Our main application of the construction above is to Schreier coset graphs of $\F_r$, the free group of rank $r$. So let $S=\{s_1,\ldots, s_r\}$ be a free generating set for $\F_r$. Given a subgroup $K<\F_r$, let $\Sch(K,S) \in \RG(S,2r)$ be the rooted labeled digraph with vertex set $K\backslash \F_r$, directed $s$-labeled edges $(Kg,Kgs)$ (for every $Kg \in K\backslash \F_r$ and $s\in S$) and root vertex $K$. By abuse of language, we will not distinguish between $\Sch(K,S)$ and its root-isomorphism class. Let $\SC(\F_r, S)\subset \RG(S,2r)$ be the set of all Schreier coset graphs of subgroups of $\F_r$. The map $K \in \Sub(\F_r) \mapsto \Sch(K,S) \in \SC(\F_r,S)$ is a homeomorphism. Also $\F_r$ acts on $\SC(\F_r,S)$ by $g\Sch(K,S):=  \Sch(gKg^{-1},S)$ and the homeomorphism between $\Sub(\F_r)$ and $\SC(\F_r,S)$ is equivariant with respect to this action. Moreover, $\Sch(gKg^{-1},S)$ is isomorphic to $\Sch(K,S)$ even though they are typically not root-isomorphic.  Precisely, $\Sch(gKg^{-1},S)$ is root-isomorphic to $\Sch(K,S)$ after moving the root in $\Sch(K,S)$ from $K$ to $Kg^{-1}$. The paper \cite{AGV12} further develops this correspondence between $\Sub(\F_r)$ and $\SC(\F_r,S)$.

\subsection{Measured equivalence relations}

Let $(X,\mu)$ be a standard Borel probability space and $E \subset X \times X$ be a Borel equivalence relation. Then $E$ is {\em discrete} if every $E$-class is countable. We use \cite{K} as a general reference.

Define measures $\mu_L, \mu_R$ on $E$ by
$$\mu_L(F) = \int | F \cap \pi_L^{-1}(x)| ~d\mu(x), \quad \mu_R(F) = \int | F \cap \pi_R^{-1}(x)| ~d\mu(x)$$
where $\pi_L:E \to X, \pi_R: E \to X$ are the left and right projection maps. The measure $\mu$ is {\em $E$-invariant} if $\mu_L=\mu_R$.

The full group of $E$, denoted $[E]$, is the group of all (equivalence classes of) invertible Borel transformations $f$ such that $\textrm{graph}(f)=\{(x,fx):~x\in X\}\subset E$. Two transformations are equivalent if they agree on a conull subset. By \cite[Proposition 3.2]{K}, $[E]$ with the uniform metric, defined by 
$$d_u(\phi,\psi) = \mu(\{x\in X:~\phi(x)\ne \psi(x)\}),$$
is a Polish group. It is a standard exercise to show that $\mu$ is $E$-invariant if and only if $\phi_*\mu=\mu$ for every $\phi \in [E]$. 

A subset $Y \subset X$  is {\em $E$-saturated} if $Y$ is a union of $E$-classes. We say $\mu$ is {\em $E$-ergodic} if $\mu(Y)\in \{0,1\}$ for every measurable $E$-saturated set $Y$.

Measured equivalence relations arise from actions of groups: if $G$ is a countable group and $G \cc (X,\mu)$ a measure-class-preserving action then $E=\{(x,gx):~x\in X, g\in G\}$ is a discrete Borel equivalence relation. Moreover, the action of $G$ is measure-preserving if and only if $\mu$ is $E$-invariant and the action of $G$ is ergodic if and only if $\mu$ is $E$-ergodic.

\section{Totally Non-Free Actions}\label{sec:tnf}

%Let $G$ be a locally compact group and $G \cc (X,\cB,\mu)$ a pmp (probability-measure-preserving) action. For any Borel set $H \subset G$, let $\Fix(H)=\{x \in X:~ hx =x ~\forall h \in H\}$. A probability measure preserving action $G \cc (X,\cB,\mu)$ is {\em totally non-free} if the collection $\{\Fix(H):~ H $ a Borel subset of $G\} \subset \cB$ generates the Borel sigma-algebra $\cB$ (up to sets of measure zero). The study of such actions was initiated in \cite{Ve11}. 

For $K < G$ let $N_G(K):=\{g \in G:~gKg^{-1}=K\}$ denote the normalizer of $K$ and $\IRS_n(G)$ be the set of all measures $\eta \in \IRS(G)$ such that $\eta(\{K \in \Sub(G):~ N_G(K)=K\})=1$.

\begin{lem}
For any $\mu \in \IRS_n(G)$, the action $G \cc (\Sub(G), \mu)$ is totally non-free.
\end{lem}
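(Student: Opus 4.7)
The plan is to reduce the generation condition to a statement about cylinder sets of the form $\{K : g \in K\}$ for single elements $g \in G$. The key observation is the tautological identity
\[
\Fix(\{g\}) = \{K \in \Sub(G) : gKg^{-1}=K\} = \{K : g \in N_G(K)\},
\]
which follows directly from the definitions of $\Fix$ and of the conjugation action on $\Sub(G)$.

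The hypothesis $\mu \in \IRS_n(G)$ then enters in exactly one place: $N_G(K) = K$ holds on a $\mu$-conull set. On that set, the identity above becomes $\Fix(\{g\}) = \{K : g \in K\}$, so modulo $\mu$-null sets the sigma-algebra generated by $\{\Fix(H) : H \subset G \text{ Borel}\}$ contains every cylinder $\{K : g \in K\}$.

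It remains to recognize that these cylinders generate the Borel sigma-algebra on $\Sub(G)$. For the main case of interest in this paper, where $G$ is a countable discrete group such as $\F_r$, this is immediate: $\Sub(G)$ embeds into $2^G$ via characteristic functions, the Chabauty topology is the subspace topology, and the cylinders together with their complements form a countable sub-basis. For general second-countable locally compact $G$ one argues similarly using a countable family of Borel subsets $H_n \subset G$ adapted to a countable basis of $G$, together with the identity $\Fix(H_n) = \{K : H_n \subset K\}$ which again holds $\mu$-a.e. The whole argument is essentially an unpacking of definitions, so there is no substantial obstacle; the only place where one must take a little care is this final generation step in the locally compact case, while the countable case needed for the paper's applications is entirely routine.
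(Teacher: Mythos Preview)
Your proof is correct and follows essentially the same route as the paper's. The paper states the identity $\Fix(H)=\{K:H\subset K\}$ modulo $\mu$-null sets for an arbitrary Borel $H\subset G$, while you specialize to singletons $H=\{g\}$; since $\Fix(H)=\bigcap_{h\in H}\Fix(\{h\})$ and the cylinders $\{K:g\in K\}$ already generate $\cB$, the two formulations are equivalent and neither gains anything over the other.
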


\begin{proof}
Assuming $\mu \in \IRS_n(G)$, for any Borel set $H \subset G$, $\mu(\Fix(H) \vartriangle \{ K \in \Sub(G):~ H \subset K\}) = 0$ where $\vartriangle$ denotes symmetric difference. Therefore, for any Borel set $H \subset G$, $\{K \in \Sub(G):~ H \subset K\}$ is contained in the sigma-algebra generated by the sets $\{\Fix(H):~H $ a Borel subset of $G\}$ (up to measure zero). This clearly generates the Borel sigma-algebra.

%On the other hand, if $G \cc (\Sub(G), \mu)$ is totally non-free then the stabilizer map $\Stab:\Sub(G) \to \Sub(G)$ defined by $\Stab(K)=\{g\in G:~gKg^{-1}=K\}$ determines a measure-conjugacy $(\Sub(G),\mu) \to (\Sub(G), \Stab_*\mu)$. Of course, $K<\Stab(K)=N_G(K)$.  

\end{proof}
Recall that $\F_r$ denotes the free group of rank $r$.
\begin{lem}\label{lem:normalizer}
For any $r \ge 2$, $\IRS_n(\F_r) \cap \IRS_{i}(\F_r)$ is dense in $\IRS_{i}(\F_r)$. 
\end{lem}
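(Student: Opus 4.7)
The plan is to reduce to the ergodic case via convexity and then construct a random $\F_r$-equivariant perturbation. Since $\IRS_i(\F_r)$ is defined as the closed convex hull of $\IRS_{ie}(\F_r)$, and the property $\eta(\{K : N_{\F_r}(K) = K\}) = 1$ is pointwise on subgroups and hence preserved under convex combinations, it suffices to show that every ergodic $\mu \in \IRS_{ie}(\F_r)$ lies in the weak-$*$ closure of $\IRS_n(\F_r) \cap \IRS_i(\F_r)$; an ergodic-decomposition argument then extends this to arbitrary elements of $\IRS_i(\F_r)$.

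The key reformulation uses the Schreier graph correspondence of Section~\ref{sec:Schreier}: the condition $N_{\F_r}(K) = K$ is equivalent to the pointed Schreier graph $\Sch(K, S)$ having no non-trivial label- and direction-preserving graph automorphism, since the quotient $N_{\F_r}(K)/K$ acts freely on the vertex set $K \backslash \F_r$ by exactly such automorphisms. Thus self-normalization is rigidity of the random Schreier graph, and the problem becomes one of approximating any unimodular random Schreier graph by a random graph that is almost surely rigid.

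Given $\mu \in \IRS_{ie}(\F_r)$, I would introduce random symmetry-breaking through an auxiliary space. Take $\Omega = \{0,1\}^{\F_r}$ with the conjugation $\F_r$-action $(g \cdot \omega)(h) = \omega(g^{-1} h g)$ and the Bernoulli$(p)$ product measure $\nu_p$, which is invariant because the action merely permutes coordinates. Build a $\F_r$-equivariant measurable map $\Psi : \Sub(\F_r) \times \Omega \to \Sub(\F_r)$ that sends $(K, \omega)$ to a subgroup $\Psi(K, \omega)$ close to $K$ but encoding the random decoration $\omega$ in an $\F_r$-equivariant way --- for instance by passing to a random finite-index subgroup of $K$ whose Schreier graph is a cover of $\Sch(K, S)$ decorated according to $\omega$ and agreeing with $K$ on a large ball around the root. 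The pushforward $\Psi_*(\mu \times \nu_p)$ is automatically an IRS supported on infinite-index subgroups, and for appropriate choice of parameters it will be close to $\mu$ in weak-$*$ distance. One then argues via a Borel--Cantelli type argument that almost surely the random decoration destroys every element of $N_{\F_r}(K) \setminus K$, so that $\Psi_*(\mu \times \nu_p) \in \IRS_n$.

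The main obstacle will be the balancing act in the construction: the random perturbation must be sparse enough to keep $\Psi(K, \omega)$ locally close to $K$ (ensuring weak-$*$ closeness of the pushforward to $\mu$), yet rich enough to almost surely destroy every potential non-trivial normalizer symmetry. The probabilistic crux is showing that for each fixed $g \in N_{\F_r}(K) \setminus K$, the probability over $\omega$ that $g$ still normalizes the perturbed subgroup $\Psi(K, \omega)$ is small enough that a union bound over all such $g$ still gives total probability zero.
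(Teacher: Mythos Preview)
Your overall strategy---decorate the random Schreier graph with sparse i.i.d.\ marks that destroy every nontrivial automorphism, and let the sparsity parameter go to zero---is exactly the paper's. The gap is that you have not actually built the equivariant map $\Psi$, and the hints you give do not lead to one.

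First, the auxiliary space. Taking $\Omega=\{0,1\}^{\F_r}$ with the \emph{conjugation} action $(g\cdot\omega)(h)=\omega(g^{-1}hg)$ gives no equivariant way to place a bit on each vertex of $\Sch(K,S)$: the vertices are right cosets $Kh$, and conjugation on $\F_r$ does not descend to a well-defined action on them (nor does the left shift give a single bit per coset once $K$ is infinite). The paper sidesteps this by putting the i.i.d.\ randomness directly on the vertex set $K\backslash\F_r$; this is a \emph{relative} Bernoulli extension fibered over $\Sub(\F_r)$, not a global product $\Sub(\F_r)\times\Omega$. So the sentence ``$\Psi_*(\mu\times\nu_p)$ is automatically an IRS'' presupposes an equivariant $\Psi$ that you have not produced.

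Second, invariance. The paper's perturbation replaces each marked vertex by a small $3$-vertex gadget encoding a random label in $\{1,\dots,r\}$; because this changes the cardinality of the vertex set, unimodularity is not free. The paper biases the law of the mark at the root (the $q=3p/(1+2p)$) and then proves invariance via a class-bijective map from an auxiliary equivalence relation. This is exactly the ``balancing act'' you flag as the main obstacle, and it is where the work is; it cannot be dismissed as automatic. Your alternative suggestion of passing to a random finite-index subgroup $K'<K$ is too vague: you would still have to arrange $N_{\F_r}(K')=K'$, which is not a consequence of being finite-index in $K$, and the mechanism by which $\omega\in\{0,1\}^{\F_r}$ selects such a $K'$ equivariantly is left unspecified.

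Once a correct construction is in place, the self-normalizing argument is indeed of Borel--Cantelli type: there are only countably many candidate automorphisms of the perturbed graph (each determined by where it sends the root), and each one is almost surely ruled out because it would have to match infinitely many independent random labels. That part of your outline matches the paper. The reduction to ergodic $\mu$ is harmless but unnecessary; the paper treats arbitrary $\eta\in\IRS_i(\F_r)$ directly.
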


\begin{proof}
%\begin{proof}[Proof of Lemma \ref{lem:normalizer}]
%Let $S=\{s_1,\ldots, s_r\}$ be a free generating set for $\F_r$. Recall that for any $K<\F_r$, the Schreier coset graph of $K$ has vertex set $K\backslash \F_r$ and for every $s\in S$ and $Kg \in K\backslash \F_r$ there is a directed edge $(Kg, Kgs)$ labeled $s$. In particular, the Schreier coset graph is a directed graph in which every vertex has in-degree and out-degree equal to $r$. It may have loops or multiple edges. The automorphism group $\Aut(K \backslash \F_r)$ is the group of all graph automorphisms which preserve the labels and directions on edges. It is isomorphic to $K \backslash N_{\F_r}(K)$ via the map $Kn \in K \backslash N_{\F_r}(K) \mapsto (Kg \mapsto nKg = Kng)$. In particular, $N_{\F_r}(K) = K$ if and only if $\Aut(K \backslash \F_r)$ is trivial. 

%The proof relies on the technique of viewing a random subgroup in terms of its Schreier coset graph. See \cite{AGV12} for more details on this method. 
%Briefly, the main idea is of the proof is: we consider $\Sch(K,S)$ where $K\in \Sub(\F_r)$ is random with law equal to a given measure $\eta \in \IRS_i(\F_r)$. We make random local perturbations to this Schreier coset graph depending on a parameter $p$. The new perturbed graph is the Schreier coset graph of a random subgroup $K_p$ whose law is $\mu_p \in \IRS_i(\F_r)\cap \IRS_n(\F_r)$. We then show that $\mu_p$ limits on $\eta$ as $p\searrow 0$.

Let $\eta \in \IRS_i(\F_r)$. We will construct measures $\eta_p \in \IRS_i(\F_r) \cap \IRS_n(\F_r)$ for $p \in (0,1)$ such that $\eta_p \to \eta$ as $p\searrow 0$. 

Let $K \in \Sub(\F_r)$ be random with law $\eta$.  Let $u_p$ be the probability measure on $\{0,1,\ldots, r\}$ given by
\begin{displaymath}
u_p(\{i\}) = \left\{\begin{array}{cc}
1-p & i=0 \\
p/r & i >0
\end{array}\right.\end{displaymath}
Choose a function $x:K\backslash \F_r \to \{0,1\ldots, r\}$ by: $x(K)$ is random with law $u_q$ where $q = \frac{3p}{1+2p}$ and $x(Kg)$ is random with law $u_p$ if $Kg \ne K$. We require that the random variables $\{x(Kg):~Kg \in K\backslash \F_r\}$ are independent.

%Let $x:K \backslash \F_r \to \{0,1, \ldots, r\}$ be random with law equal to the product measure $u_p^{K \backslash \F_r}$. 

We construct a random rooted labeled digraph $\Gamma_p$ from the Schreier coset graph $\Sch(K,S)$ as follows. The vertex set of $\Gamma_p$ is equal to the disjoint union of $x^{-1}(0)$ with $x^{-1}(i) \times \{0,1,2\}$ for $i=1,\ldots, r$. Let $i \in \{1,\ldots, r\}$. Then there is an $s_i$-labeled edge in $\Gamma_p$ from
\begin{itemize}
\item $Kg$ to $Kgs_i$ whenever $x(Kg)=x(Kgs_i)=0$,
\item $Kg$ to $(Kgs_i,0)$ whenever $x(Kg)=0$ and $x(Kgs_i) >0$,
\item $(Kg,2)$ to $Kgs_i$ whenever $x(Kg)>0$ and $x(Kgs_i)=0$,
\item $(Kg, 2)$ to $(Kgs_i,0)$ whenever $x(Kg)>0$ and $x(Kgs_i)>0$,
\item $(Kg,0)$ to $(Kg,2)$ whenever $x(Kg)=i$,
\item $(Kg,1)$ to $(Kg,1)$ whenever $x(Kg)=i$,
\item $(Kg,0)$ to $(Kg,1)$ whenever $x(Kg)\ne i$ and $x(Kg)>0$,
\item $(Kg,1)$ to $(Kg,2)$ whenever $x(Kg)\ne i$ and $x(Kg)>0$.
\end{itemize}
See Figure \ref{fig:normalizer}. We determine the root of $\Gamma_p$ as follows. If $x(K)=0$ then let $K$ be the root of $\Gamma_p$. If $x(K)>0$ then let $(K,i)$ be the root of $\Gamma$ where $i\in \{0,1,2\}$ is random with the uniform distribution. 

 \begin{figure}[htb]
\begin{center}
\ \psfig{file=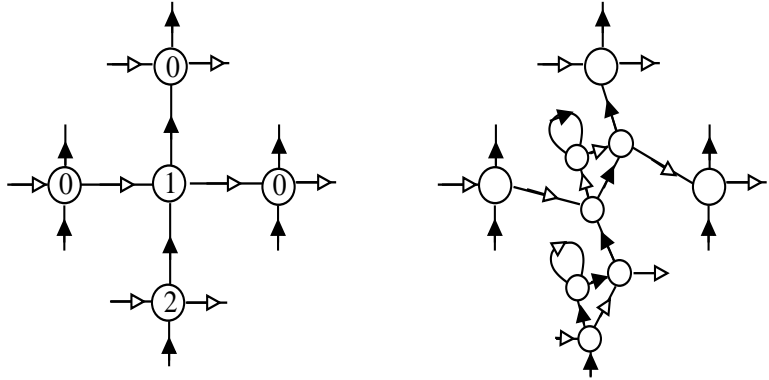,height=2in,width=4in}
\caption{The element $x$ is depicted on the left as a function on the vertices of the Schreier coset graph $K \backslash \F_2$. The Schreier coset graph $K_p \backslash \F_r$ is depicted on the right. Black arrows represent $s_1$ and white arrows represent $s_2$. }
\label{fig:normalizer}
\end{center}
\end{figure}

We claim that $\Gamma_p$ is the Schreier coset graph of a subgroup of $\F_r$. It suffices to show that for every vertex $v$ of $\Gamma_p$ and every $s\in S$ there is exactly one $s$-labeled edge directed into $v$ and one directed out of $v$. We leave this easy verification to the reader.

Let $K_p$ be the subgroup of $\F_r$ with $\Sch(K_p,S) \simeq \Gamma_p$. Let $\eta_p$ be the law of $K_p$. Because $\Gamma_p$ limits on $\Sch(K,S)$ in law as $p\searrow 0$, it follows that $\lim_{p \to 0+}\eta_p=\eta$. Because $\Sch(K,S)$ is infinite a.s., $\Gamma_p$ is also infinite a.s. So it suffices to show that $\eta_p$ is invariant and $N_{\F_r}(K_p) = K_p$ a.s.

\noindent {\bf Self-normalizing}.

%We claim that if $0<p<1$ then $N_{\F_r}(K_p)=K_p$ a.s. 
Here we show that $N_{\F_r}(K_p) = K_p$ a.s. In general, if $T$ is a labeled digraph let $\Aut(T)$ denote the group of all automorphisms of $T$. If $T$ is also rooted, then we ignore the root (so an element of $\Aut(T)$ is not required to preserve the root). Observe that for any subgroup $K<\F_r$, $K \backslash N_{F_r}(K)$ is isomorphic to $\Aut(\Sch(K,S))$ by the map $Kn \mapsto (Kg \mapsto nKg = Kng)$. So it suffices to show that $\Aut(\Gamma_p)$ is trivial a.s.

Let $\Aut_*(\Gamma_p)$ be the set of all graph-automorphisms of $\Gamma_p$ that are required to preserve all edge-labels and directions {\em except} for the edge-labels on edges of the form $( (Kg,i), (Kg,j))$ for some $i,j\in \{0,1,2\}$. We consider two such automorphisms to be equivalent if they induce the same map on the vertex set. There are only countably many equivalence classes of automorphisms in $\Aut_*(\Gamma_p)$ because each one is completely determined by where it sends the root vertex.  For example, if such an automorphism $\phi$ maps $(Kg,0)$ to $(Kf,0)$ then it must map $(Kg,j)$ to $(Kf,j)$ for all $j \in \{0,1,2\}$.

So it suffices to show that any nontrivial $\phi \in \Aut_*(\Gamma_p)$ is not in $\Aut(\Gamma_p)$ a.s. To see this, let $U$ be the set of all vertices of $\Gamma_p$ of the form $(Kg,1)$ and let $U'$ be the set of all vertices of $\Gamma_p$ that have a single loop. Then $U \subset U'$ and $\phi$ preserves $U'$. In particular, if $u \in U$ then the unique loop based at $\phi(u)$ must have the same label as the loop at $u$. But this occurs with probability at most $\max(p/r, \frac{3p}{r(1+2p)})<1$. Because $p>0$, $U$ is infinite. So the event that the loops at $\phi(u)$ has the same label as $u$ for all $u\in U$ occurs with probability zero. So $\Aut(\Gamma_p)$ is trivial a.s. which implies $N_{\F_r}(K_p) = K_p$ a.s. 

\noindent {\bf Invariance}.

We claim that $\eta_p$ is invariant. To see this, let $\SC'$ be the set of all pairs $(\Gamma',x')$ where $\Gamma' \in \SC(\F_r,S)$ is a Schreier coset graph and $x'$ is a map from the vertices of $\Gamma'$ to $\{0,1,\ldots, r\}$. Let $E'$ be the equivalence relation on $\SC'$ determined by: $(\Gamma',x')E'(\Gamma'',x'')$ if there is a graph isomorphism $\phi$ from $\Gamma'$ to $\Gamma''$ preserving all directions, labels on the edges and taking $x'$ to $x''$ (so $x''(\phi(v))=x'(v)$). In other words, $(\Gamma',x')$ and $(\Gamma'',x'')$ are isomorphic after forgetting about their roots. This is a discrete Borel equivalence relation and if $\mu_p$ is the law of $(\Sch(K,S),x)$ (with $K,x$ as above) then $\mu_p$ is $E$-invariant. 

Now let $Y$ be the set of all $(\Gamma',x') \in \SC'$ such that $x'(v)>0$ where $v$ is the root of $\Gamma'$. Let $Z$ be the disjoint union of $\SC'\setminus Y$ with $Y \times \{0,1,2\}$. Let $\pi: Z \to \SC'$ be the projection map:
\begin{displaymath}
\pi(z) = \left\{\begin{array}{ll}
z & \textrm{if } z \in \SC'\setminus Y \\
y & \textrm{if } z = (y, i) \in Y \times \{0,1,2\}
\end{array}\right.
\end{displaymath}
Let $E''$ be the equivalence relation on $Z$ given by: $z_1 E'' z_2 \Leftrightarrow \pi(z_1)E' \pi(z_2)$. Let $\mu'_p$ be the measure on $Z$ given by $\mu'_p \upharpoonright \SC'\setminus Y = \mu_p \upharpoonright \SC'\setminus Y$ and $\mu'_p \upharpoonright Y \times \{0,1,2\} = (\mu_p \upharpoonright Y) \times \tau$ where $\tau$ denotes the uniform probability measure on $\{0,1,2\}$. Then $\frac{1}{1+2p}\mu'_p$ is an $E''$-invariant probability measure. 

Let $\phi:Z \to \Sub(\F_r)$ be the map constructed above. More precisely, an element $z \in Z$ corresponds to a point $(\Gamma',x') \in \SC'$ and possibly an index $i\in \{0,1,2\}$. Then $\phi(z)$ is the subgroup whose Schreier coset graph is obtained from $(\Gamma',x')$ in the manner depicted by Figure \ref{fig:normalizer} (so $\phi_*\mu'_p = \eta_p$). The index $i$ (if it is present) corresponds to a choice of one of the three ``new'' vertices. 

Observe that $\phi$ is {\em class-bijective}: for every $E''$-equivalence class $[z]$, the restriction of $\phi$ to $[z]$ is bijective onto the conjugacy class of $\phi$ (for a.e. $z$). This uses the fact that $N_{\F_r}(K_p) = K_p$ a.s. Because $\mu'_p$ is $E''$-invariant, it now follows that $\phi_*\mu'_p = \eta_p$ is $\F_r$-invariant.

\end{proof}

\begin{lem}
The subset $\IRS_n(\F_r)$ is a $G_\delta$-subset of $\IRS(\F_r)$.
\end{lem}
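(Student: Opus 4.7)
My plan is to write $\IRS_n(\F_r)$ as a countable intersection, indexed by $g \in \F_r \setminus \{e\}$, of sets of the form $\{\eta : \eta(B_g) = 0\}$ where each $B_g \subset \Sub(\F_r)$ is a closed set; then use the standard fact that for any closed $B$, the set $\{\eta : \eta(B) = 0\}$ is $G_\delta$ in the weak$^*$ topology.

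For the first step, observe that a subgroup $K<\F_r$ fails to be self-normalizing precisely when there exists $g \in \F_r \setminus K$ with $gKg^{-1}=K$. (Recall that $g \in K$ automatically implies $gKg^{-1}=K$.) So if I set
\[
B_g := \{K \in \Sub(\F_r) : gKg^{-1}=K,\ g \notin K\},
\]
then $\{K : N_{\F_r}(K) \ne K\} = \bigcup_{g \in \F_r \setminus \{e\}} B_g$, and hence
\[
\IRS_n(\F_r) = \bigcap_{g \in \F_r \setminus \{e\}} \{\eta \in \IRS(\F_r) : \eta(B_g)=0\}.
\]

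Next I need that each $B_g$ is closed. The set $\{K : gKg^{-1}=K\}$ is the fixed-point set of the conjugation-by-$g$ homeomorphism of $\Sub(\F_r)$, hence closed. The key point — and the only place where anything specific about $\F_r$ enters — is that because $\F_r$ is \emph{discrete}, the set $\{K : g \in K\}$ is clopen in the Chabauty topology (basic neighborhoods are cut out by finite-subset conditions on $\F_r$). Therefore $\{K : g \notin K\}$ is also clopen, and $B_g$ is the intersection of two closed sets.

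Finally, for any closed subset $B$ of a compact metrizable space, the map $\eta \mapsto \eta(B)$ is upper semi-continuous on the space of Borel probability measures in the weak$^*$ topology (Portmanteau), so $\{\eta : \eta(B) < 1/n\}$ is open and
\[
\{\eta : \eta(B)=0\} = \bigcap_{n \ge 1} \{\eta : \eta(B) < 1/n\}
\]
is $G_\delta$. Applying this with $B = B_g$ and taking the countable intersection over $g \in \F_r \setminus \{e\}$ exhibits $\IRS_n(\F_r)$ as a countable intersection of $G_\delta$ sets, hence $G_\delta$. There is no real obstacle here — the whole argument hinges on noticing that discreteness of $\F_r$ makes the ``$g \notin K$'' part of the defining condition clopen rather than merely open, so that $B_g$ is closed (not just $F_\sigma$), which is exactly what is needed to promote ``$\eta(B_g) = 0$'' to a $G_\delta$ condition.
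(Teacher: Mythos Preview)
Your proof is correct and follows essentially the same route as the paper: both exhibit $\IRS_n(\F_r)$ as $\bigcap_{g}\bigcap_{m}\{\eta:\eta(\text{closed set depending on }g)<1/m\}$, using that $\{K:g\notin K\}$ is clopen and then invoking upper semi-continuity of $\eta\mapsto\eta(B)$ for closed $B$. The only cosmetic difference is that the paper works with $Z(g)=\{K:gKg^{-1}\subset K,\ g\notin K\}$, obtaining closedness by intersecting the clopen sets $\{K:g(F\cap K)g^{-1}\subset K\}$ over finite $F$, whereas you use $B_g=\{K:gKg^{-1}=K,\ g\notin K\}$ and get closedness from the fixed-point set of the conjugation homeomorphism; both choices work equally well.
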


\begin{proof}
%The previous two 
%Given a finite set $F \subset \F_r$ and an element $g \in \F_r$, let $X(F,g)$ be the set of all $K \in \Sub(\F_r)$ such that $g \notin K$ and $g(F \cap K)g^{-1} \subset K$. For $\epsilon>0$, let  $O(F,g, \epsilon)$ be the set of all measures $\mu \in \IRS(\F_r)$ such that $\mu( X(F,g) ) <\epsilon$. Then $O(F,g,\epsilon)$ is an open set. Let
%$$O(g,\epsilon) = \cup_{F \subset \F_r} O(F,g,\epsilon)$$
%where the union is over all finite subsets of $\F_r$. Note that $O(g,\epsilon)$ is the set of all $\mu\in \IRS(\F_r)$ such that $\mu(\{K \in \Sub(\F_r):~ g \in N_{\F_r}(K) \setminus K\}) < \epsilon$. 

Given a finite set $F \subset \F_r$ and an element $g \in \F_r$, let $X(F,g)$ be the set of $K \in \Sub(\F_r)$ such that $g(F \cap K)g^{-1} \subset K$.  Let $Y(g)=\{K \in \Sub(\F_r):~ g \notin K\}$. Both $X(F,g)$ and $Y(g)$ are clopen sets. Let $Z(g) = Y(g) \cap \bigcap_F X(F,g)$. Then $Z(g)$ is a closed set. Note that $K \in Z(g)$ if and only if $g \in N_{\F_r}(K) \setminus K$. 

Let $O(g,\epsilon)$ be the set of all measures $\mu$ in $\IRS(\F_r)$ such that $\mu(Z(g))<\epsilon$.  Then $O(g,\epsilon)$ is open and
$$\IRS_n(\F_r) =\cap_{g\in \F_r}  \cap_{m=1}^\infty  O(g,1/m).$$

%Let $X(g)$ be the set of all $K \in \Sub(\F_r)$ such that $g \in N_{\F_r}(K) \setminus K$. We claim that $X(g)$ is open in $\Sub(\F_r)$. Indeed, $X(g)$ is in the intersection of two sets: the set $\{K \in \Sub(\F_r):~ g \notin K\}$ which is clopen and the set $\{K \in \Sub(\F_r):~ gKg^{-1} = K\}$ which is the union of all ...

% and
%$$\IRS_n(\F_r) =  \cup_{F \subset \F_r} \cup_{g\in \F_r} O(F,g,\epsilon)$$

\end{proof}

\begin{cor}\label{cor:tnf}
The set of measures $\mu \in \IRS_{i}(\F_r)$ such that $\F_r \cc (\Sub(\F_r),\mu)$ is totally non-free is residual in $\IRS_{i}(\F_r)$.
\end{cor}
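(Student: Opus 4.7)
The plan is to combine the three lemmas already proved in this section. The first lemma shows that $\IRS_n(\F_r) \subset \IRS(\F_r)$ consists of measures whose associated action is totally non-free; so it suffices to show that $\IRS_n(\F_r) \cap \IRS_i(\F_r)$ contains a dense $G_\delta$ subset of $\IRS_i(\F_r)$.

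First, I would note that $\IRS_i(\F_r)$, being a closed subset of the Polish space $\IRS(\F_r)$, is itself a Polish space, and hence the Baire category theorem applies. From the third lemma, $\IRS_n(\F_r)$ is a $G_\delta$ subset of $\IRS(\F_r)$, so its intersection with $\IRS_i(\F_r)$ is a $G_\delta$ subset of $\IRS_i(\F_r)$. From the second lemma, this intersection is dense in $\IRS_i(\F_r)$. Therefore $\IRS_n(\F_r) \cap \IRS_i(\F_r)$ is a dense $G_\delta$ subset of $\IRS_i(\F_r)$, and in particular it is residual.

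Finally, by the first lemma, every $\mu \in \IRS_n(\F_r) \cap \IRS_i(\F_r)$ has the property that $\F_r \cc (\Sub(\F_r), \mu)$ is totally non-free. Hence the set of $\mu \in \IRS_i(\F_r)$ for which this action is totally non-free contains a residual set, and is therefore itself residual, which is exactly the claim. There is no real obstacle to this argument: the entire content of the corollary has been packaged into the three lemmas, and the corollary is their immediate consequence via the Baire category theorem.
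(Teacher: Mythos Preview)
Your argument is correct and is exactly the intended one: the paper states the corollary without proof precisely because it follows immediately from the three preceding lemmas, and your write-up spells out that deduction cleanly.
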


%\begin{remark}
%A simple exercise shows that $\IRS_n(\F_r) \cap \IRS_{i}(\F_r)$ is a $G_\delta$ subset of $\IRS_{i}(\F_r)$. So $\IRS_n(\F_r) \cap \IRS_{i}(\F_r)$ is residual in $\IRS_{i}(\F_r)$. %So the generic element of $\IRS_{i}(\F_r)$ is contained in $\IRS_n(\F_r)$.
%\end{remark}

\section{The simplex of invariant measures}
 
%In general a convex closed metrizable subset $K$ of a locally convex linear
%space is a simplex if each point in $K$ is the barycenter of a unique probability
%measure supported on the subset $\partial_e K$ of extreme points of  $K$. In this case, $K$ is called a {\em Poulsen simplex} if $\partial_e K$ is dense in $K$. For example, an old result states that the space of all shift-invariant Borel probability measures on $\{0,1\}^\Z$ is a Poulsen simplex (we will not need this fact).  It is known from \cite{LOS78} that there is a unique Poulsen simplex up to affine isomorphism. Moreover, its set of extreme points is homeomorphic to $l^2$. 

\begin{thm}\label{thm:topology}
If $\F_r$ is a nonabelian free group, then $\IRS_{i}(\F_r)$ is a Poulsen simplex. In particular, $\IRS_{ie}(\F_r)$ is a dense $G_\delta$ subset of $\IRS_{i}(\F_r)$ and $\IRS_{ie}(\F_r)$ is homeomorphic to the Hilbert space $l^2$. 
\end{thm}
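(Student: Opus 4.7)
The plan is to verify that $\IRS_i(\F_r)$ is a metrizable Choquet simplex whose extreme boundary $\IRS_{ie}(\F_r)$ is weak$^*$-dense, then invoke the Lindenstrauss-Olsen-Sternfeld characterization \cite{LOS78} of the Poulsen simplex. The remaining conclusions---that $\IRS_{ie}(\F_r)$ is a dense $G_\delta$ subset of $\IRS_i(\F_r)$ homeomorphic to $l^2$---then follow from known features of the Poulsen boundary, the $G_\delta$ property being a general fact about ergodic measures in a continuous Polish action.

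The simplex structure is formal. Since the conjugation-action of $\F_r$ on the compact metrizable space $\Sub(\F_r)$ is continuous, $\IRS(\F_r)$ is a metrizable Choquet simplex whose extreme points are the ergodic measures. Every finite-index subgroup $K < \F_r$ is isolated in $\Sub(\F_r)$ (being finitely generated) and has only finitely many conjugates, so each conjugacy class $C$ of a finite-index subgroup is clopen. Writing the complement $A^c$ of $A := \{K : [\F_r : K] = \infty\}$ as the countable union of these clopen conjugacy classes $C_1, C_2, \ldots$, one obtains $\IRS_i(\F_r) = \bigcap_i \{\mu : \mu(C_i) = 0\}$, an intersection of closed sets, hence closed. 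It is also a face, since concentration of a probability measure on a Borel set is preserved and detected by convex decomposition. A closed face of a Choquet simplex is itself a Choquet simplex whose extreme points are the ambient extreme points contained in the face; ergodicity applied to the invariant set $A$ identifies these with $\IRS_{ie}(\F_r)$.

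The main obstacle is density of $\IRS_{ie}(\F_r)$ in $\IRS_i(\F_r)$. My plan is to refine Lemma \ref{lem:normalizer}. When the input IRS $\eta$ is ergodic, the enriched measure $\mu_p'$ in that proof, living on pairs $(\Sch(K,S), x)$, is a relatively Bernoulli extension of $\eta$---conditional on the Schreier graph, the labels $\{x(Kg)\}$ are independent. A fiberwise Kolmogorov zero--one argument then shows this extension is ergodic, and its factor $\eta_p$ inherits ergodicity, so the lemma already supplies ergodic approximations to any ergodic $\mu$. For a general $\mu \in \IRS_i(\F_r)$, I would first use the ergodic decomposition to approximate $\mu$ in weak$^*$ by a finite convex combination $\sum_i \lambda_i \eta_i$ of ergodic IRSs, and then merge the resulting ergodic $\eta_{i,p}$'s into a single ergodic IRS weak$^*$-close to $\sum_i \lambda_i \eta_{i,p}$. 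I would engineer this merger by selecting a free weakly mixing $\F_r$-action $\F_r \cc (Y, \nu)$ with a measurable partition $Y = \bigsqcup_i Y_i$ satisfying $\nu(Y_i) = \lambda_i$, together with realizations $\F_r \cc (X_i, \mu_i)$ of the $\eta_{i,p}$'s as stabilizer IRSs, then forming a skew product whose stabilizer IRS reproduces the local Schreier statistics of $\eta_{i,p}$ on the piece $Y_i$ while inheriting ergodicity from the weakly mixing base $Y$. The delicate point---and the principal technical obstacle---is performing this merger so that both $\F_r$-invariance and weak$^*$ proximity to $\sum_i \lambda_i \eta_{i,p}$ are preserved. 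Once density is established, \cite{LOS78} completes the identification and all the remaining assertions of the theorem follow.
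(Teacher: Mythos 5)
Your simplex-structure argument is sound, and the proposed refinement of Lemma~\ref{lem:normalizer} for ergodic input $\eta$ is plausible (the enriched measure is a relatively Bernoulli groupoid extension of $\eta$, and a fibered zero--one law should push ergodicity up, though this does need to be checked). The genuine gap is the merger, and the skew-product sketch does not resolve it. If $Y=\bigsqcup_i Y_i$ is a measurable partition of an ergodic base $(Y,\nu)$, the pieces $Y_i$ are not invariant, so a single orbit meets several of them; the stabilizer $\Stab(y,z)$ of a point in a skew product is a single fixed subgroup of $\F_r$, and there is no coherent sense in which it can ``reproduce the local Schreier statistics of $\eta_{i,p}$ on the piece $Y_i$'' as $y$ wanders through the partition. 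Nor can you dodge this by running the Lemma~\ref{lem:normalizer} gadget construction on the mixture $\mu'=\sum_i\lambda_i\eta_i$ directly: the gadgets are attached \emph{within} each coset graph, so an orbit never leaves its ergodic component of $\mu'$, and the enriched measure decomposes into mutually singular pieces exactly as $\mu'$ does --- no ergodicity is created.

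The paper handles arbitrary $\eta\in\IRS_i(\F_r)$ in one shot with a qualitatively different construction that makes the merger automatic. To a $p$-Bernoulli subset of cosets of $\Sch(K,S)$ it grafts \emph{entire fresh Schreier coset graphs} $\Sch(K_x,S)$ with $K_x$ drawn iid from $\eta$, joined by $*$-labeled edges, and iterates to produce a random tree of Schreier graphs; a local surgery on the $*$-edges then turns this tree into a single bona fide Schreier graph $\phi(\Gamma)$. Because a single equivalence class now visits infinitely many iid $\eta$-samples, any $E_\RG$-invariant function, pushed down via the projection $\pi$ to $\Sub(\F_r)$, must agree on those iid samples and hence be $\eta$-a.e.\ constant; thus $\eta_p$ is ergodic with \emph{no} ergodicity hypothesis on $\eta$, and the ergodic decomposition and merging never arise. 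This is the idea your proposal is missing: to fuse ergodic components one must let a single orbit see independent draws from $\eta$, which requires attaching whole Schreier graphs rather than decorating individual cosets.
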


\begin{proof}
Let $\eta \in  \IRS_i(\F_r)$. It suffices to construct measures $\eta_p \in \IRS_{ie}(\F_r)$ for $p \in (0,1)$ such that $\eta_p$ limits on $\eta$ as $p\searrow 0$. First, we construct a random rooted digraph $\Gamma$ with edge-labels in $S\cup \{*\}$ where $S=\{s_1,\ldots, s_r\}$ is a free generating set of $\F_r$. We will prove that the law of $\Gamma$ is invariant under root-changes and is ergodic with respect to a natural equivalence relation. Then we construct a Schreier coset graph $\phi(\Gamma)$ directly from $\Gamma$. We denote the law of the subgroup corresponding to $\phi(\Gamma)$ by $\eta_p$. From properties of $\Gamma$,  we verify that $\eta_p$ is invariant, ergodic and $\lim_{p \to 0+} \eta_p=\eta$.

\noindent {\bf Construction of $\Gamma$}.

The random rooted graph $\Gamma$ will be constructed as a union $\Gamma= \cup_{n=0}^\infty \Gamma_n$ of increasing subgraphs. The definition of $\Gamma_n$ is recursive. 
Let $K <\F_r$ be a random subgroup with law $\eta$. Let $\Gamma_0=\Sch(K,S)$ denote the Schreier coset graph of $K$. Let $X_0 \subset V(\Gamma_0)$ be a $p$-Bernoulli percolation. Precisely, $X_0$ is a random subset satisfying
\begin{itemize}
\item for each $v \in V(\Gamma_0)$, the probability that $v \in X_0$ equals $p$,
\item the events $\{v \in X_0:~v \in V(\Gamma_0)\}$ are independent.
\end{itemize}
For each $x\in X_0$, let $K_x <\F_r$ be a random subgroup with law $\eta$. We require that the random variables $\{K_x:~x\in X_0\}$ are independent. Let $\Gamma_1$ be the rooted labeled di-graph obtained from the disjoint union of $\Gamma_0$ and $\sqcup_{x\in X_0} \Sch(K_x,S)$ by adding an edge from $x \in X_0$ to $K_x \in V(\Sch(K_x,S))$ for every $x\in X_0$. Each new edge is labeled $*$ and is undirected. The root of $\Gamma_1$ is the same as the root of $\Gamma_0$, namely $K$. See figure \ref{fig:ergodic}.

Assuming $\Gamma_n$ has been constructed (for some $n\ge 1$), let $X_{n}$ be a Bernoulli $p$-percolation on $V(\Gamma_{n})\setminus V(\Gamma_{n-1})$. For each $x \in X_{n}$, let $K_x \in \Sub(\F_r)$ be random with law $\eta$ so that the variables $\{K_x:~x\in X_n\}$ are independent and independent of all other variables of the construction. Construct $\Gamma_{n+1}$ from the disjoint union of $\Gamma_{n}$ and $\sqcup_{x\in X_n} \Sch(K_x,S)$ by adding an undirected $*$-labeled edge from $x \in X_n$ to $K_x \in V(\Sch(K_x,S))$ for each $x\in X_n$. Finally, let $\Gamma  = \cup_{n\ge 0} \Gamma_n$. 

 \begin{figure}[htb]
\begin{center}
\ \psfig{file=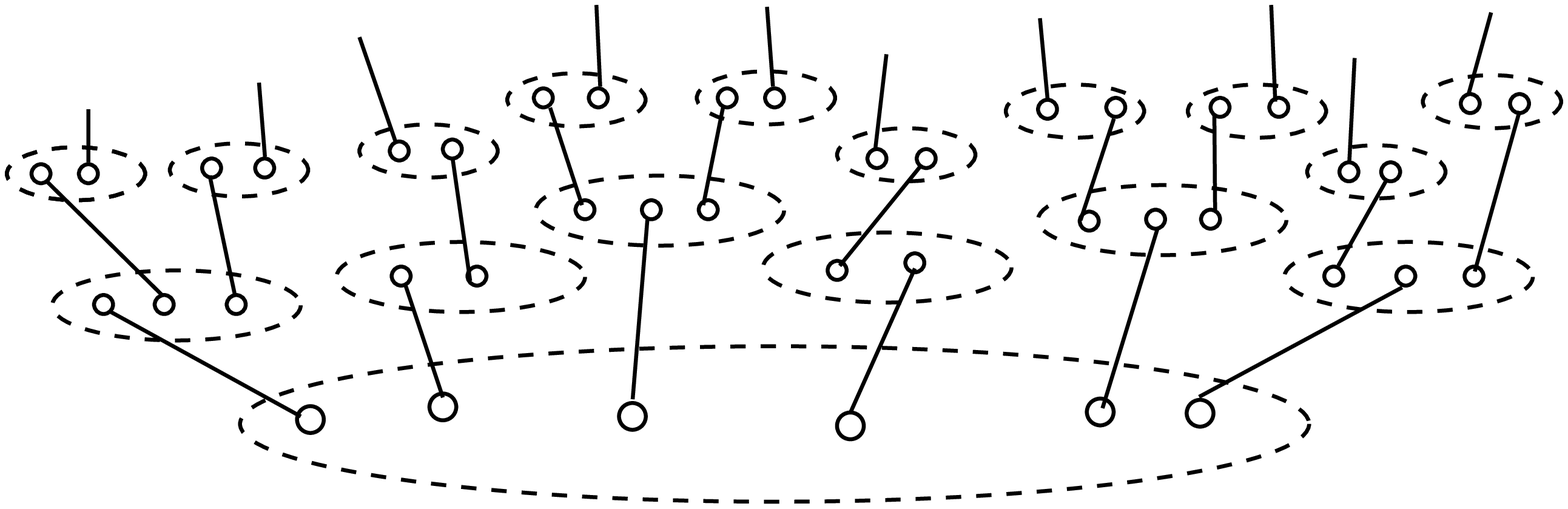,height=1.5in,width=4in}
\caption{The bottom ellipse represents $\Gamma_0$. The circles in the bottom ellipse represent $X_0$. All the edges shown are $*$-labeled edges. The other ellipses represent other $\F_r$-orbits of the vertex set of $\Gamma$ in the sense explained below.}
\label{fig:ergodic}
\end{center}
\end{figure}

\noindent {\bf Construction of $\phi(\Gamma)$}.

Observe that there is a natural right-action of $\F_r$ on the vertex set of $\Gamma$: if $v\in V(\Gamma)$ and $s\in S$ then there is a unique vertex $w \in V(\Gamma)$ such that the edge $(v,w)$ is $s$-labeled. So we define $vs:=w$. This determines a right-action of $\F_r$ on $V(\Gamma)$. 

Let $\phi(\Gamma)$ be the Schreier coset graph obtained from $\Gamma$ as follows. For every $*$-labeled edge $e=\{v,w\}$ of $\Gamma$, we remove $e$, the edge $(v,vs_1)$ and the edge $(w,ws_1)$ and add in the directed $s_1$-labeled edges $(v,ws_1), (w,vs_1)$. See Figure \ref{fig:surgery}. The root of $\phi(\Gamma)$ is the same as the root of $\Gamma$, namely $K$. It is clear that $\Gamma$ is a Schreier coset graph (that is, every vertex has exactly one outgoing $s$-labeled edge and one incoming $s$-labeled edge for every $s\in S$).

 \begin{figure}[htb]
\begin{center}
\ \psfig{file=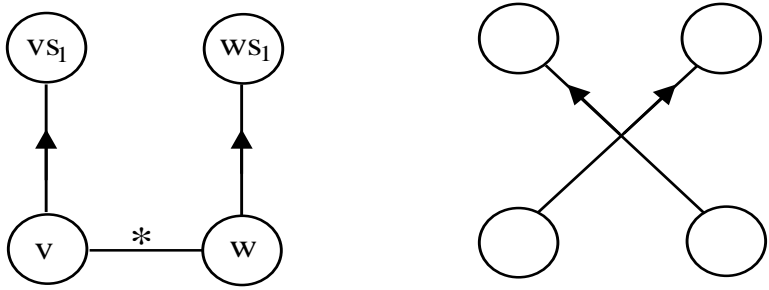,height=1.5in,width=4in}
\caption{On the left is shown a $*$-labeled edge of $\Gamma$ from $v$ to $w$ and $s_1$-labeled edges $(v,vs_1)$ and $(w,ws_1)$. On the right, the corresponding subgraph of $\phi(\Gamma)$ is shown. }
\label{fig:surgery}
\end{center}
\end{figure}

\noindent {\bf Invariance and ergodicity}.

Let $\eta_p$ be the law of the subgroup corresponding to $\phi(\Gamma)$. It now suffices to show that $\eta_p$ is invariant, ergodic and $\lim_{p\to 0+} \eta_p=\eta$. To simplify notation, let $\RG$ denote $\RG(S \cup \{*\}, 2r+1)$ where  $\RG(S\cup \{*\},2r+1)$ is as defined in \S \ref{sec:Schreier}. Observe that $\Gamma$ is a random element of $\RG$. So its law, denoted $\mu$, is a probability measure on $\RG$.

%that a rooted $S\cup\{*\}$-edge-labeled digraph is a tuple $(V,E,v_0,\psi)$ such that $(V,E)$ is a graph in which some edges may be directed and some may be undirected, $v_0\in V$ is a distinguished vertex called the root and $\psi:E \to S \cup \{*\}$ is a ``labelling'' map. Two such graphs are {\em isomorphic} if there is a graph-isomorphism between them that preserves edge directions and labels. They are {\em root-isomorphic} if there is a graph-isomorphism between them that preserves roots in addition to edge directions and labels. Let $\RG$ denote the set of all root-isomorphism classes of rooted $S\cup\{*\}$-labeled connected digraphs whose vertex degrees are bounded by $2r+1$. By abuse of language, we will think of elements of $\RG$ as rooted labeled digraphs rather than root-isomorphism classes of such. 

%The set $\RG$ has a natural metric. Given $\Lambda,\Lambda' \in \RG$, we set $d(\Lambda,\Lambda') = \frac{1}{n+1}$ where $n \ge 0$ is the smallest non-negative integer such that the ball of radius $n$ centered at the root of $\Lambda$ is root-isomorphic to the ball of radius $n$ centered at the root of $\Lambda'$. This metric makes $\RG$ into a compact metric space.

%given any $\Lambda \in \RG$ with a finite number of vertices, let $C(\Lambda)$ be the set of all $\Lambda' \in \RG$ such that there is a graph-embedding of $\Lambda$ into $\Lambda'$ that preserves roots, edge-directions and labels. The collection of all such $C(\Lambda)$ forms a basis for a compact metrizable topology on $\RG$.

Let $E_\RG \subset \RG \times \RG$ be the isomorphism equivalence relation. Precisely, $(\Lambda,\Lambda') \in E_\RG$ if $\Lambda$ and $\Lambda'$ are isomorphic but not necessarily root-isomorphic.  Also $\mu$ is $E_\RG$-invariant because the construction of $\Gamma$ is independent of the root. The reader might wonder why we use $\cup_n \Gamma_n$ instead of simply $\Gamma_1$. The answer is that the law of $\Gamma_1$ is not $E_\RG$-invariant. Indeed $V(\Gamma_0)$ is the unique $G$-orbit of $V(\Gamma_1)$ that is incident to infinitely many $*$-labeled edges of $\Gamma_1$. So if every vertex of $\Gamma_1$ sends one unit of mass to the closest vertex in $\Gamma_0$ then every vertex of $X_0$ receives infinite mass, in violation of the mass transport principle. 

%We will show that $\mu$ is $E_\RG$-ergodic as well.

%There is a natural right-action of $\F_r$ on the support of $\mu$. Namely, if $\Lambda=[V,E,v_0,\psi]$ is the support of $\mu$ and $s\in S$, then let $\Lambda s = [V,E,v_0 s, \psi]$ where $v_0s$ is the unique vertex such that there is an $s$-labeled edge directed from $v_0$ to $v_0s$. The orbits of this action are contained in the $E_\RG$-equivalence classes. In particular, it is a measure-preserving action.

Next, we show that $\mu$ is $E_\RG$-ergodic. For $\Lambda$ in the support of $\mu$, let $\pi(\Lambda) \in \Sub(\F_r)$ be the subgroup whose Schreier coset graph is the same as the connected component of the root in the graph obtained from $\Lambda$ by removing all of the $*$-labeled edges. Then $\pi_*\mu=\eta$. We claim that if $f$ is any bounded Borel $E_\RG$-invariant function on $\RG$ then there exists a bounded Borel $\F_r$-invariant function $f'$ on $\Sub(\F_r)$ such that $f = f' \circ \pi$ $\mu$-a.e. Indeed, this holds because the construction of $\Gamma$ is Bernoulli conditioned on $K \in \Sub(\F_r)$. That is to say, the random variables $X_0$, $\{K_x:~x\in X_0\}$, $X_1$, etc are all independent. 

Now let $f$ and $f'$ be as above. Because $f$ is $E_\RG$-invariant, if $\Gamma_x$ is isomorphic to $\Gamma$ rooted at $K_x$ (for some $x\in X_0$) then $f(\Gamma_x)=f(\Gamma)$ a.s. Thus $f'(K_x) = f'(K)$ for every $x\in X_0$ a.s. Because $X_0$ is infinite a.s. and $\{K_x:~x\in X_0\}$ are iid with law $\eta$, it follows that $f'$ is constant on $\Sub(\F_r)$ $\eta$-a.e. Therefore $f$ is constant on $\RG$ $\mu$-a.e. Because $f$ is arbitrary, $\mu$ is $E_\RG$-ergodic.

Given $\Lambda$ in the support of $\mu$, let $\phi(\Lambda) \in \SC(\F_r,S)$ be the associated Schreier coset graph as in the definition of $\phi(\Gamma)$. The construction of $\phi(\Gamma)$ from $\Gamma$ shows that for any $g\in \F_r$, there exists $g^\phi \in [E_\RG]$ such that $\phi(g^\phi \Lambda) = g \phi(\Lambda)$ for $\mu$-a.e. $\Lambda$. Because $\mu$ is $E_\RG$-invariant this shows that $\phi_*\mu$ is $\F_r$-invariant. Moreover, if $A \subset \SC(\F_r,S)$ in the image of $\phi$ is any $\F_r$-invariant Borel set then $\phi^{-1}(A)$ is $E_\RG$-invariant. Because $\mu$ is $E_\RG$-ergodic, $\phi_*\mu(A) = \mu(\phi^{-1}(A)) \in \{0,1\}$. Since $A$ is arbitrary, $\phi_*\mu$ is $\F_r$-ergodic.

Because $\eta_p$ is the image of $\phi_*\mu$ under the canonical map from $\SC(\F_r,S)$ to $\Sub(\F_r)$, it follows that $\eta_p$ is also $\F_r$-invariant and ergodic.

\noindent {\bf Continuity}. 

It now suffices to show that $\lim_{p\to 0+} \eta_p=\eta$. Let $F \subset \F_r$ and $r>0$. Let 
$$C(F,r)=\{H \in \Sub(\F_r):~ H \cap B(r) = F\}$$
where $B(r)$ is the ball of radius $r$ centered at the identity in $\F_r$. The span of the characteristic functions of sets of the form $C(F,r)$ is dense in the Banach space of continuous functions on $\Sub(\F_r)$. So it suffices to show that $\lim_{p\to 0+} \eta_p(C(F,r)) = \eta(C(F,r))$ for all $F,r$.

Let $\epsilon>0$. If $p>0$ is sufficiently small then with probability $>1-\epsilon$ the ball of radius $r$ centered at the root of $\Gamma$ has trivial intersection with $X_0$. Conditioned on this event, the ball of radius $r$ centered at the root of $\phi(\Gamma)$ is isomorphic to the ball of radius $r$ centered at the root of $\Sch(K,S)$. Because $K$ is $\eta$-random, it follows that for any $F \subset \F_r$, 
$$\Big|\eta_p(C(F,r)) - \eta(C(F,r)) \Big| \le 2\epsilon.$$
Because $\epsilon,F,r$ are arbitrary, $\lim_{p \to 0+}\eta_p =\eta$.
 
\end{proof}

%\begin{remark}
%In \cite{LOS78} it is proven that the Poulsen simplex is unique up to affine isomorphism. Moreover, the set of extreme points is homeomorphic to $l^2$. Hence $\IRS_{ie}(\F_r)$ is homeomorphic to $l^2$.
%\end{remark}

%\begin{question}
%Which groups $G$ have the property that $\IRS_{i}(G)$ is a Poulsen simplex? a Bauer simplex? It is not too difficult to modify the proof above to show that $\IRS_{i}(PSL_2(\R))$ is also a Poulsen simplex. %$\IRS_{ie}(PSL_2(\R))$ may be interpreted as a kind of Teic
%\end{question}

%\begin{lem}
%If $\eta \in \IRS_e(\F_r)$ is not supported on a single conjugacy class then $\eta$-a.e. $H \in \Sub(\F_r)$ is not finitely generated.
%\end{lem}

\section{Generic graphings}

Let $(X,\mu)$ be a standard Borel probability space and $E \subset X \times X$ be a discrete probability-measure-preserving (pmp) Borel equivalence relation. We assume that $E$ is ergodic and aperiodic (which means that a.e. equivalence class is infinite). 

  To set notation, let $\Hom(\F_r,[E])$ denote the set of all homomorphisms from the rank $r$ free group $\F_r=\langle s_1,\ldots, s_r\rangle$ to $[E]$. We identify $\Hom(\F_r,[E])$ with the product space $[E]^r$ in the obvious way and endow it with the product of uniform topologies. Thus it is a Polish space. For latter purposes it will be useful to have the following explicit metric on $\Hom(\F_r,[E])$:
 $${\bf d}_u(\alpha,\beta):= \sum_{i=1}^r d_u(\alpha(s_i),\beta(s_i))$$
 for $\alpha,\beta \in \Hom(\F_r,[E])$. Also let $E_\beta \subset E$ be the subequivalence relation generated by $\beta$. Precisely, $(x,y) \in E_\beta$ if there exists $w\in \F_r$ such that $\beta(w)x=y$.% Let $\Hom_a(\F_r,[E])$, be the set of $\beta \in \Hom(\F_r,[E])$ such that $\beta(s_i) \in \APER \cap [E]$ $\forall i$. Let $\Hom_{wm}(\F_r,[E]) \subset \Hom_a(\F_r,[E])$  be the set of those $\beta$ such that $\beta(s_i)$ is weakly mixing for all $i$. 

\begin{defn}\label{defn:graphing}
%Let $(X,\mu,E)$ be a discrete pmp equivalence relation. 
Let $[[E]]$ be the set of all Borel isomorphisms $\phi:A \to B$ such that $A, B \subset X$ are Borel and the graph of $\phi$ is contained in $E$. A subset $\Gamma \subset [[E]]$ is a {\em graphing} if for a.e. $x\in X$ and every $y$ with $(x,y) \in E$ there is a sequence $\gamma_1,\ldots, \gamma_n \in \Gamma$ and $\epsilon_1, \ldots, \epsilon_n \in \{-1,1\}$ such that $\gamma_1^{\epsilon_1}\cdots \gamma_n^{\epsilon_n} x = y$. The {\em cost} of $\Gamma$ is defined by
$$\textrm{cost}(\Gamma) = \sum_{\gamma \in \Gamma} \mu(\textrm{domain}(\gamma)).$$
The {\em cost} of $E$ is the infimum of cost$(\Gamma)$ over all graphings $\Gamma$ of $E$.
\end{defn}

\begin{lem}\label{lem:graphing0}
Let $\Hom_g(\F_r,[E]) \subset \Hom(\F_r,[E])$ be the set of all homomorphisms $\alpha$ such that $\{\alpha(s_i):~1\le i \le r\}$ is a graphing of $E$ (i.e., $E_\alpha = E$ up to measure zero). If either $r>cost(E)$ or ($r=cost(E)$ and $E$ is treeable) then $\Hom_g(\F_r,[E])$ is a non-empty $G_\delta$-subset of $\Hom(\F_r,[E])$. 
\end{lem}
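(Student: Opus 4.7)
The plan is to prove the two claims separately. For the $G_\delta$ assertion, I first fix, via the Feldman-Moore theorem, a countable family $\{g_n\}_{n\in\N}\subset[E]$ whose graphs together cover $E$ modulo null sets. Since $E_\alpha\subset E$ always, one checks that $\alpha\in\Hom_g(\F_r,[E])$ if and only if $\text{graph}(g_n)\subset E_\alpha$ up to $\mu_L$-null sets for every $n$, which by swapping countably many quantifiers is equivalent to: for every $n$ and $\mu$-a.e.\ $x\in X$ there exists $w\in\F_r$ with $\alpha(w)x=g_n(x)$. Letting $B_N\subset\F_r$ be the ball of radius $N$ in the generators $\{s_1,\ldots,s_r\}$ and
$$U_{n,N}(\alpha) := \bigcup_{w\in B_N}\bigl\{x\in X:\alpha(w)x = g_n(x)\bigr\},$$
this condition becomes $\mu(U_{n,N}(\alpha))\to 1$ as $N\to\infty$ for every $n$, so
$$\Hom_g(\F_r,[E]) = \bigcap_{n,m\ge 1}\bigcup_{N\ge 1}\bigl\{\alpha\in\Hom(\F_r,[E]) : \mu(U_{n,N}(\alpha)) > 1 - \tfrac{1}{m}\bigr\}.$$

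Next I will show that each inner set is open, which reduces to two routine continuity facts: (i) composition and inversion in $[E]$ are $d_u$-isometric in each variable, so $\alpha\mapsto\alpha(w)$ is $d_u$-continuous for each fixed $w\in\F_r$; and (ii) for any fixed $g_n\in[E]$, $\psi\mapsto\mu\{x:\psi(x)=g_n(x)\}$ is $d_u$-continuous on $[E]$, because the symmetric difference of $\{x:\psi(x)=g_n(x)\}$ and $\{x:\psi'(x)=g_n(x)\}$ is contained in $\{x:\psi(x)\ne\psi'(x)\}$, whose measure is $d_u(\psi,\psi')$. Finite unions of measurable sets are continuous in the symmetric-difference metric, so $\{\alpha:\mu(U_{n,N}(\alpha))>1-\tfrac{1}{m}\}$ is indeed open, and $\Hom_g$ is $G_\delta$.

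For non-emptiness, I will invoke the standard cost-theoretic fact that under either hypothesis ($r>\text{cost}(E)$, or $r=\text{cost}(E)$ with $E$ treeable) there exist $\alpha_1,\ldots,\alpha_r\in[E]$ that together generate $E$; setting $\alpha(s_i):=\alpha_i$ then yields an element of $\Hom_g(\F_r,[E])$. In the treeable equality case this is essentially Gaboriau's realization of a treeable pmp equivalence relation of integer cost $r$ as the orbit equivalence relation of a free p.m.p.\ $\F_r$-action. In the strict inequality case, one starts with a graphing of cost strictly less than $r$ and exploits the aperiodicity of $E$ to repack and extend its partial transformations into exactly $r$ full elements of $[E]$: partial transformations with small domains are stacked with disjoint domains and disjoint ranges inside a single slot, and the leftover measure is matched by a Borel selection inside the $E$-classes, which does not shrink the generated equivalence relation.

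The $G_\delta$ half is essentially routine once Feldman-Moore supplies the correct countable family against which to test $E_\alpha=E$; the real obstacle is the non-emptiness half, which rests on the non-trivial content of cost theory—principally Gaboriau's treeability theorem and the standard repacking-and-extension lemma for graphings—both of which I plan to treat as black boxes.
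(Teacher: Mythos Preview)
Your argument is correct and follows the same basic strategy as the paper: express $\Hom_g(\F_r,[E])$ as a countable intersection of open conditions, each asserting that a fixed element of a countable family generating $E$ is captured by $E_\alpha$ up to small measure, and cite cost theory for non-emptiness.

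There is one structural difference worth noting. The paper first invokes non-emptiness (citing Hjorth's \emph{A lemma for cost attained}), fixes some $\beta\in\Hom_g(\F_r,[E])$, and uses the countable family $\{\beta(w):w\in\F_r\}$ as the testing set; the open sets are then
\[
O(w,\epsilon)=\bigl\{\gamma: \mu(\{x:(x,\beta(w)x)\in E_\gamma\})>\epsilon\bigr\}.
\]
Your choice of Feldman--Moore generators $\{g_n\}$ instead decouples the $G_\delta$ verification from the non-emptiness claim, so the two halves become logically independent; it also makes the openness check slightly more transparent, since your sets $U_{n,N}(\alpha)$ involve only finitely many words, whereas the paper's $O(w,\epsilon)$ hides a further countable union over $E_\gamma=\bigcup_{v\in\F_r}\mathrm{graph}(\gamma(v))$ that one must unravel. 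For non-emptiness the paper simply cites Hjorth, which packages exactly the Gaboriau treeability result and the repacking/extension lemma you sketch.
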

  
 \begin{proof}
 By \cite[Proposition 1.1]{Hj},  $\Hom_g(\F_r,[E])$ is nonempty. So let $\beta \in \Hom_g(\F_r,[E])$. For $w \in \F_r$ and $\epsilon>0$, let $O(w,\epsilon)$ be the set of all $\gamma \in \Hom(\F_r,[E])$ such that 
 $$\mu(\{x \in X:~ (x,\beta(w)x) \in E_\gamma\}) > \epsilon.$$
 This set is open and
 $$\Hom_g(\F_r,[E]) = \bigcap_{w\in \F_r} \bigcap_{n=1}^\infty O(w,1-1/n).$$
 This shows $\Hom_g(\F_r,[E])$ is a $G_\delta$. 
 \end{proof}

An element $f \in [E]$ is {\em aperiodic} if a.e. $f$-orbit in $X$ is infinite. For somewhat technical reasons (Theorem \ref{thm:isom}), we will be interested in the set $\Hom'_g(\F_r,[E])$ of all homomorphisms $\alpha \in \Hom_g(\F_r,[E])$ such that $\alpha(s_1)$ is ergodic and aperiodic.

\begin{lem}\label{lem:graphing1}
If $E$ is ergodic, aperiodic and $cost(E)<r$ then the set $\Hom'_g(\F_r,[E])$ is a nonempty $G_\delta$ subset of $\Hom(\F_r,[E])$.
\end{lem}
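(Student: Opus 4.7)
The statement has two parts: showing that $\Hom'_g(\F_r,[E])$ is $G_\delta$, and showing it is nonempty. I would handle them separately.

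For the $G_\delta$ part, I would use Lemma~\ref{lem:graphing0} to know that $\Hom_g(\F_r,[E])$ is already $G_\delta$. Since the evaluation map $\alpha\mapsto\alpha(s_1)$ from $\Hom(\F_r,[E])$ to $[E]$ is continuous, it suffices to observe that the set of ergodic aperiodic elements of $[E]$ is $G_\delta$ in the uniform topology. Aperiodicity is the countable conjunction $\bigcap_{n,k\ge 1}\{T:\mu(\Fix(T^n))<1/k\}$ of open conditions, because $T\mapsto\mu(\Fix(T^n))$ is upper semicontinuous. Ergodicity is also $G_\delta$, via the standard mean-ergodic-theorem characterization applied to a countable $L^2$-dense family. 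Pulling back and intersecting with $\Hom_g$ gives $\Hom'_g$ as a $G_\delta$.

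For nonemptiness, my plan is to construct a tuple $(T_1,\ldots,T_r)\in\Hom'_g$ explicitly. First, since $E$ is ergodic and aperiodic, the full group $[E]$ contains an ergodic aperiodic element (in fact the set of such elements is a dense $G_\delta$ in $[E]$, a standard fact about full groups of ergodic aperiodic equivalence relations). Pick one such $T_1$, and let $F=E_{T_1}$ be its orbit equivalence relation, which is hyperfinite, ergodic, aperiodic and has cost $1$. I would then produce $T_2,\ldots,T_r\in[E]$ so that $\{T_1,\ldots,T_r\}$ generates $E$.

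To do this, I would invoke Gaboriau's relative-cost bound for hyperfinite subrelations to conclude $\mathrm{cost}_F(E)\le\mathrm{cost}(E)-1<r-1$. Then there exists a countable family $\Psi\subset[[E]]$ with $\sum_{\psi\in\Psi}\mu(\mathrm{dom}(\psi))<r-1$ such that $F$ together with $\Psi$ generates $E$. By the same absorption/packing argument used in Hjorth's proof of \cite[Proposition~1.1]{Hj}, and using aperiodicity of $E$ to fit partial transformations inside Rokhlin towers, the family $\Psi$ can be packed into $r-1$ total automorphisms $T_2,\ldots,T_r\in[E]$ with the property that every edge in $\Psi$ lies in the relation generated by $T_2,\ldots,T_r$. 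Consequently $\{T_1,T_2,\ldots,T_r\}$ generates $E$, so $(T_1,\ldots,T_r)\in\Hom'_g$.

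The main obstacle is the absorption step: the first generator $T_1$ is fixed in advance to be ergodic and aperiodic, and the remaining $r-1$ total automorphisms must still cover all the edges of $\Psi$ while lying in $[E]$. Here the strict inequality $\mathrm{cost}(E)<r$ supplies the combinatorial slack needed to fit all the partial transformations into only $r-1$ total ones, and aperiodicity of $E$ enables the Rokhlin-tower rearrangements. If one wants to avoid the relative-cost machinery, an alternative is to begin with an arbitrary generating $r$-tuple $(U_1,\ldots,U_r)\in\Hom_g(\F_r,[E])$ from Lemma~\ref{lem:graphing0} and perform a local surgery: replace $U_1$ by an ergodic aperiodic element $T_1$ that agrees with $U_1$ off a small set $B$, and compensate by modifying some $U_j$ on a comparably small set so that the ``lost'' edges originally carried by $U_1$ on $B$ reappear as words in the new tuple—this is workable because $\mathrm{ERG}\cap\mathrm{APER}$ is dense in $[E]$ and because aperiodicity of $E$ permits rerouting within $E$-classes.
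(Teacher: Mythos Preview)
Your proof is correct and follows essentially the same route as the paper. The paper is terser---it cites \cite[Theorem~3.6]{K} for the $G_\delta$ property of ergodic elements (observing that in an aperiodic $E$ every ergodic $f\in[E]$ is automatically aperiodic, so your separate check of aperiodicity is unnecessary) and \cite[Lemma~27.7]{KM04} for nonemptiness, remarking that in that construction $\varphi_1$ may be taken ergodic---but your relative-cost/packing argument is exactly what underlies that lemma.
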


\begin{proof}
The fact that $\Hom'_g(\F_r,[E])$ is nonempty can be seen from the proof of \cite[Lemma 27.7]{KM04} (just choose $\varphi_1$ to be ergodic). That lemma is attributed to Hjorth-Kechris. By Lemma \ref{lem:graphing0}, $\Hom_g(\F_r,[E])$ is a $G_\delta$. Because $E$ is aperiodic, every ergodic element $f\in [E]$ is aperiodic. So it suffices to show that the set of ergodic elements in $[E]$ is a $G_\delta$ subset. This is part of \cite[Theorem 3.6]{K}.
\end{proof} 
  
%  We will say that $E$ is {\em hyperfinite} if there exists a set $Y \subset X$ with $\mu(Y)=1$ and an increasing sequence $\{R_i\}_{i=1}^\infty$ of finite measurable equivalence relations on $Y$ such that $E \cap (Y \times Y)= \cup_{i=1}^\infty R_i$. We say that a subset $K \subset [[E]]$ {\em generates} $E$ if $E$ is the smallest equivalence relation containing, for each $k\in K$ the graph $\{ (x,kx):~ x \in \textrm{domain}(k)\}$ (up to sets of measure zero). 
  
   %Because $E$ is hyperfinite aperiodic and ergodic, there exists an aperiodic element $g \in [E]$ generating $E$ (for example, see \cite[Theorem 6.6]{KM04}).

 For $\beta \in \Hom(\F_r,[E])$ let $\Stab^\beta:X \to \Sub(\F_r)$ be the stabilizer map: $\Stab^\beta(x) = \{g \in \F_r:~ \beta(g)x=x\}$. Observe that for any $g\in \F_r$ and $x\in X$, $\Stab^\beta(\beta(g)x) = g \Stab^\beta(x) g^{-1}.$ In other words, $\Stab^\beta$ is $\F_r$-equivariant.  Let $\mu_\beta \in \IRS(\F_r)$ be the pushforward measure $\mu_\beta:=\Stab^\beta_*\mu$.  Therefore, $\Stab^\beta$ defines a factor map from $\F_r \cc^\beta (X,\mu)$ to $\F_r \cc (\Sub(\F_r),\mu_\beta)$.

 \begin{thm}\label{thm:isom}
Let $(X,\mu,E)$ be an ergodic aperiodic discrete pmp equivalence relation. Let $\Hom_{isom}(\F_r,[E])$ be the set of homomorphisms $\beta\in \Hom(\F_r,[E])$ such that $\Stab^\beta$ is an isomorphism from $(X,\mu)$ to $(\Sub(\F_r), \mu_\beta)$. %Let $\Hom_{nonfree}(\F_r,[E])$ be the set of homomorphisms $\beta\in \Hom(\F_r,[E])$ such that $\F_r \cc (\Sub(\F_r), \Stab^\beta_*\mu)$ is totally non-free. 
%If $r>cost(E)$ then 
%$$\Hom_{isom}(\F_r,[E]) \cap \Hom_{nonfree}(\F_r,[E])\cap \Hom'_g(\F_r,[E])$$
 %is a dense $G_\delta$ subset of $\Hom'_g(\F_r,[E])$. In particular, it is nonempty.
 If $r>cost(E)$ then 
$$\Hom_{isom}(\F_r,[E]) \cap  \Hom'_g(\F_r,[E])$$
 is a dense $G_\delta$ subset of $\Hom'_g(\F_r,[E])$. In particular, it is nonempty.
 
  \end{thm}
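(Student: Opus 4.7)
The statement has two parts: showing the set is $G_\delta$ in the Polish space $\Hom'_g(\F_r,[E])$ (which is itself Polish, being $G_\delta$ in $\Hom(\F_r,[E])$ by Lemma \ref{lem:graphing1}), and showing it is dense.  I first rephrase the condition.  Since $\Stab^\beta$ is always $\F_r$-equivariant, measurable, and measure-preserving from $(X,\mu)$ to $(\Sub(\F_r),\mu_\beta)$, it is a measure-space isomorphism iff it is injective on a conull set, equivalently the sub-$\sigma$-algebra $\cF_\beta := \sigma(\{\Fix^\beta(g) : g \in \F_r\})$ coincides with the Borel $\sigma$-algebra of $X$ modulo $\mu$.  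Thus $\beta \in \Hom_{isom}(\F_r,[E])$ is equivalent to the action $\F_r \cc (X,\mu)$ via $\beta$ being totally non-free in the sense of Section \ref{sec:tnf}.

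For the $G_\delta$ part, fix a countable family $(A_n)_{n \ge 1}$ of Borel sets dense in the measure algebra $(\MALG_\mu, d_\mu)$.  The above characterization says $\beta \in \Hom_{isom}$ iff for every $n,k \ge 1$ there exist a finite $F \subset \F_r$ and a Boolean combination $B$ of $\{\Fix^\beta(g) : g \in F\}$ with $\mu(A_n \triangle B) < 1/k$.  For each fixed $(n,k,F)$ and each fixed ``recipe'' for the Boolean combination, this condition is open in $\beta$, since the map $\beta \mapsto \mathbf{1}_{\Fix^\beta(g)} \in L^1(X,\mu)$ is continuous: $\mu(\Fix^\beta(g) \triangle \Fix^{\beta'}(g)) \le d_u(\beta(g),\beta'(g))$, extending from generators to arbitrary words $g \in \F_r$ by the triangle inequality in $[E]$.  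A countable union over $F$ and recipes, intersected over $(n,k)$, displays $\Hom_{isom}$ as $G_\delta$ in $\Hom(\F_r,[E])$, hence in $\Hom'_g(\F_r,[E])$.

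For density, fix $\beta_0 \in \Hom'_g$, $\epsilon > 0$, and a target set $A \subset X$; by a diagonal argument against $(A_n)$ it suffices to find $\beta \in \Hom'_g$ with $\mathbf{d}_u(\beta,\beta_0) < \epsilon$ and some word $g \in \F_r$ with $\mu(\Fix^\beta(g) \triangle A) < \epsilon$.  The hypothesis $r > \textrm{cost}(E)$ gives slack in the graphing $\{\beta_0(s_i)\}_{i=1}^r$: Gaboriau's cost theory lets one locate a Borel set $Y \subset X$ with $\mu(Y) < \epsilon$ such that the tuple obtained from $\beta_0$ by altering $\beta_0(s_r)$ arbitrarily on $Y$ still graphs $E$; since $\beta(s_1) = \beta_0(s_1)$ is untouched, it remains ergodic and aperiodic, so the modified $\beta$ sits in $\Hom'_g$.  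Applying Rokhlin's lemma to $\beta_0(s_1)$, I would build a tall thin tower whose columns approximate $A$ by $A'$ with $\mu(A \triangle A') < \epsilon/2$; then, exploiting the freedom on $Y$, install short $s_r$-loops at selected rungs so that a prescribed word $g$ in $s_1,s_r$ satisfies $\Fix^\beta(g) = A'$ up to a set of measure $< \epsilon/2$.

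The density step is the main obstacle.  The cost-theoretic ``slack lemma'' that allows one to prune $\beta_0(s_r)$ on a small set $Y$ while preserving the graphing property is the crucial technical input, and it genuinely requires the strict inequality $r > \textrm{cost}(E)$.  Lemma \ref{lem:normalizer} supplies a useful template: there a Bernoulli-random relabeling of a Schreier graph produced a self-normalizing IRS.  Here one performs the analogous local surgery inside $[E]$ on a Rokhlin skeleton, tuned instead to manufacture separating fixed sets.  Iterating this surgery across the countable family $(A_n)$ with geometrically shrinking error budgets should yield, in the limit, a $\beta \in \Hom_{isom} \cap \Hom'_g$ arbitrarily close to $\beta_0$; combined with the $G_\delta$ part above and the Polishness of $\Hom'_g$, density on individual cylinder-like conditions will upgrade to residuality.
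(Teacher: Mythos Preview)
Your $G_\delta$ argument is correct and essentially equivalent to the paper's (the paper uses a sequence of finite partitions refining to points and a separation condition, you use approximation of a dense family in the measure algebra by Boolean combinations of fixed sets; both work).

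The density argument, however, has a genuine gap at the point you yourself flag as ``the crucial technical input.'' You write that ``Gaboriau's cost theory lets one locate a Borel set $Y \subset X$ with $\mu(Y) < \epsilon$ such that the tuple obtained from $\beta_0$ by altering $\beta_0(s_r)$ arbitrarily on $Y$ still graphs $E$.'' Cost theory gives the existence of \emph{some} graphing of cost $<r$; it does not say that \emph{your given} graphing $\{\beta_0(s_i)\}$ carries slack in the specific generator $s_r$. In the paper this is exactly Claim 2, and its proof is not soft: one finds a nontrivial relation word $w$ with $\mu(X_w)>0$, and in general one must first precompose $\beta_0$ with the homeomorphism $\Phi_i$ (sending $s_i \mapsto s_1 s_i$) to force $w$ into a usable shape before the ``prune $s_r$ to its first return on a large $Y$'' argument goes through. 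The ergodicity and aperiodicity of $\beta_0(s_1)$ (the reason for working in $\Hom'_g$ rather than $\Hom_g$) are used precisely here. Without this claim your surgery has no room to operate.

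Even granting the slack, your density sketch and the paper's diverge. You propose a Baire iteration: for each target $A_n$ and tolerance $1/k$, show the open set of $\beta$ admitting some word $g$ with $\mu(\Fix^\beta(g)\triangle A_n)<1/k$ is dense, then intersect. This is a legitimate strategy, but your Rokhlin-tower step (``install short $s_r$-loops at selected rungs so that a prescribed word $g$ satisfies $\Fix^\beta(g)=A'$'') is left vague; note in particular that a single conjugate $s_1^{-j}w s_1^{j}$ has $\Fix^\beta$ equal to an $\alpha(s_1)$-translate of a subset of the small slack set, hence small, so you will need Boolean combinations and some care. The paper instead does the whole job in \emph{one} perturbation: on the small complement $X\setminus Y$ it picks a countable partition $\gamma=\{P_i\}$ that is \emph{generating} for the ergodic induced map $\alpha(s_1)_{X\setminus Y}$ (this is where Rokhlin enters), and encodes membership in $P_i$ by giving $\beta(s_r)$ exact period $i+N$ on $P_i$. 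Then each $P_i$ is pulled back from $\Sub(\F_r)$ via $\Stab^\beta$, the generating property recovers all Borel subsets of $X\setminus Y$, and ergodicity of $\alpha(s_1)$ propagates this to all of $X$. This one-shot encoding avoids any limiting procedure and makes the role of $\Hom'_g$ transparent.
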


 \begin{proof}
% By identifying $X$ with $\{0,1\}^\N$, we see that there exist sets $\{A_n, B_n\}_{n\in \N}$ such that $\{A_n,B_n\}$ is a Borel partition of $X$ and 

\noindent {\bf Claim 1}. $\Hom_{isom}(\F_r,[E])$ is a $G_\delta$.

\begin{proof}[Proof of Claim 1] 
Let $\{\xi_i\}_{i=1}^\infty$ be an increasing sequence of finite Borel partitions of $X$ such that $\bigvee_{i=1}^\infty \xi_i$ is the partition into points; i.e. for every $x \ne y \in X$ there exists $i$ such that $x$ and $y$ are in different partition elements of $\xi_i$. 

For $W \subset \F_r$, let $\pi_W: \Sub(\F_r) \to 2^W$ be the projection map $\pi_W(H)=H\cap W$ (where $2^W$ denote the set of all subsets of $W$). For every $n \in \N, \epsilon>0$ and finite  $W\subset \F_r$, let $O(W,n,\epsilon)$ be the set of all $\beta \in \Hom(\F_r,[E])$ such that for every $A,B \in \xi_n$ with $A \ne B$ there exist sets $A' \subset A, B' \subset B$ with 
\begin{itemize}
\item $\mu(A \setminus A') + \mu(B \setminus B') < \epsilon$ and 
\item $\pi_W(\Stab^\beta(A')) \cap \pi_W(\Stab^\beta(B')) = \emptyset$.
\end{itemize}
% $$ (\pi_W)_*\mu_\beta\Big( \pi_W(\Stab^\beta(A)) \cap \pi_W(\Stab^\beta(B)) \Big)  < \epsilon.$$

Each $O(W,n,\epsilon)$ is open in $\Hom(\F_r,[E])$ because for any $\beta \in \Hom(\F_r,[E])$ the set of $\beta' \in \Hom(\F_r,[E])$ which agrees with $\beta$ on $W$ except for a set of measure $<\epsilon$ is open. Also observe that if $W \subset V$ then $O(W,n,\epsilon) \subset O(V,n,\epsilon)$. Thus, $O(n,\epsilon):= \cup_{W \subset \F_r} O(W,n,\epsilon)$ is open. 

We claim that
 $$\Hom_{isom}(\F_r,[E]) = \cap_{n=1}^\infty  \cap_{k=1}^\infty O(n,1/k).$$
The inclusion $\subset$ is obvious. To see the other direction, let $\beta \in \cap_{n=1}^\infty \cap_{k=1}^\infty O(n,1/k)$ and $\mu_\beta = \Stab^\beta_*\mu$. Observe that $\mu_\beta(\Stab^\beta(A)\cap\Stab^\beta(B)) = 0$ for any $A \ne B$ in $\xi_n$ for any $n$. Since 
$$\mu_\beta( \Stab^\beta(A)) = \mu( (\Stab^\beta)^{-1}(\Stab^\beta(A)))$$
this implies $\mu_\beta(\Stab^\beta(A))=\mu(A)$ for every $A \in \xi_n$ for any $n$. Therefore $\beta$ induces a measure-algebra isomorphism from the measure-algebra of $\mu$ to the measure-algebra of $\mu_\beta$. This implies that $\beta$ is a measure-space isomorphism (see e.g. \cite[Corollary B.7]{Zi84}). So $\beta \in \Hom_{isom}(\F_r,[E])$ which proves the equality above. So $\Hom_{isom}(\F_r,[E])$ is a $G_\delta$.
\end{proof}

%\noindent {\bf Claim 2}. $\Hom_{nonfree}(\F_r,[E])$ is a $G_\delta$.

%\begin{proof}[Proof of Claim 2] 

%\end{proof}

For $2\le i \le r$, consider the map $\Phi_i:\Hom'_g(\F_r,[E]) \to \Hom'_g(\F_r,[E])$ defined by 
\begin{displaymath}
\Phi_i(\alpha)(s_j)=\left\{ \begin{array}{cc}
\alpha(s_j) & j\ne i\\
\alpha(s_1s_i) & j = i
\end{array}\right.\end{displaymath}
Because $\Phi_i(\alpha)(\F_r)<[E]$ is the same subgroup as $\alpha(\F_r)<[E]$ it follows that $\{\Phi_i(\alpha)(s_j)\}_{j=1}^r$ is a graphing of $E$ if and only if $\{\alpha(s_j)\}_{j=1}^r$ is a graphing. It is easy to check that $\Phi_i$ is a homeomorphism and $\Phi_i$ preserves $\Hom_{isom}(\F_r,[E])$. 

Let $\alpha \in \Hom'_g(\F_r,[E])$. For any measurable subset $Y \subset X$ and $g\in \F_r$, let $\alpha(g)_Y:Y \to Y$ denote the {\em first return time map}. So $\alpha(g)_Y(y) = \alpha(g)^n(y)$ where $n\ge 1$ is the smallest positive integer such that $\alpha(g)^ny \in Y$. By Poincar\'e's Recurrence Theorem, such an $n$ exists for a.e. $y \in Y$.

\noindent {\bf Claim 2}.  After replacing $\alpha$ with $\Phi_i(\alpha)$ (for some $2\le i \le r$) if necessary the following is true. For every $\epsilon>0$ there exists a subset $Y \subset X$ with $1-\epsilon<\mu(Y)<1$ and a generator $s_i$ with $2\le i \le r$ such that $\{\alpha(s_j):~ j \ne i\} \cup \{ \alpha(s_i)_{Y} \}$ is a graphing of $E$.

\begin{proof}[Proof of Claim 2]
For $w\in \F_r$, let $X_w$ be the set of all $x\in X$ such that $\alpha(w)x=x$ and if $w=t_1\cdots t_n$ is the reduced form then there does not exist $2\le j \le n$ such that $\alpha(t_j\cdots t_n)x=x$. 

\noindent {\bf Case 1}. Suppose there exists $w \in \F_r$ such that $\mu(X_w)>0$ and if $w=t_1\cdots t_n$ is in reduced form then $t_n =s_i$ for some $2\le i \le r$ and $t_1 \notin \{t_n,t_n^{-1}\}$. 

We claim that there is a subset $X'_w \subset X_w$ with positive measure such that for every $2 \le j \le n$, 
$$\alpha(t_j\cdots t_n)X'_w \cap X'_w = \emptyset.$$
To prove this, let $F$ be the set of all pairs $(x,y)$ such that $x,y\in X_w$ and $y=\alpha(t_j\cdots t_n)x$ for some $2\le j \le n$. We interpret $F$ as the set of edges of a Borel graph with vertex set $X_w$. This Borel graph has bounded degree (indeed, the degrees are all bounded by $2n$). Also $(x,x) \notin F$ for a.e. $x\in X$ by definition of $X_w$. By \cite[Proposition 4.6]{KST99}, there exists a proper coloring of this graph with only a finite set of colors. This means there is a measurable map $\phi:X_w \to \Omega$ (where $\Omega$ is a finite set) such that for every $(x,y) \in F$, $\phi(x) \ne \phi(y)$. Let $\omega \in \Omega$ be an element in the essential range of $\phi$ and set $X'_w:=\phi^{-1}(\omega)$. Then $X'_w$ satisfies the claim.

Now let $Y \subset X$ be any measurable set with $1-\epsilon<\mu(Y)<1$ such that $Y \cup X'_w = X$. Let $G=\{\alpha(s_j):~ j \ne i\} \cup \{ \alpha(s_i)_{Y} \}$ and $\cR_G \subset X \times X$ be the equivalence relation generated by $G$. Because $\alpha \in \Hom_g(\F_r,[E])$, in order to prove that $\cR_G=E$ (up to a measure zero set) it suffices to show that for any $y \in Y$ and $x \in X \setminus Y$, $(y,\alpha(s_i)y)$ and $(x,\alpha(s_i)x)$ are in $\cR_G$. 

If $\alpha(s_i)y \in Y$ then it is immediate that $(y,\alpha(s_i)y) \in \cR_G$. If not, then let $m\ge 1$ be the smallest number such that $\alpha(s_i)^my \in Y$. Because $\alpha(s_i)y \notin Y$, $\alpha(s_i)y\in X'_w$. So $\alpha(s_i)^2y = \alpha(t_n)\alpha(s_i)y \notin X'_w$. So $\alpha(s_i)^2y \in Y$ and $m=2$. In particular, $(y, \alpha(s_i)^2y) \in \cR_G$ since $\alpha(s_i)^2y = \alpha(s_i)_Yy$. By choice of $X'_w$, 
$$\alpha(t_j\cdots t_{n-1}) \alpha(s_i)^2y = \alpha(t_j\cdots t_n)\alpha(s_i)y \in Y$$
for every $2\le j \le n$. Note
$$(\alpha(t_j\cdots t_n)\alpha(s_i)y, \alpha(t_{j-1}t_j\cdots t_n)\alpha(s_i)y) \in G$$
for all $2\le j \le n$. The case $j=2$ holds because $t_1 \notin \{t_n,t_n^{-1}\}=\{s_i,s_i^{-1}\}$. Because $\alpha(t_1\cdots t_n)\alpha(s_i)y = \alpha(s_i)y$, we have shown that $(y,\alpha(s_i)y)\in \cR_G$ as required.

Now let $x\in X\setminus Y$. Because $x\in X'_w$, $\alpha(t_j\cdots t_n)x \notin X'_w$ for $2\le j\le n$.  So
$$(\alpha(t_j\cdots t_n)x, \alpha(t_{j-1}t_j\cdots t_n)x) \in G$$
for all $2\le j \le n$. The case $j=2$ holds because $t_1 \notin \{t_n,t_n^{-1}\}=\{s_i,s_i^{-1}\}$. This implies $(\alpha(t_1\cdots t_n)x, \alpha(t_n)x) \in \cR_G$. Because $\alpha(t_1\cdots t_n)x=x$ and $\alpha(t_nx)=\alpha(s_i)x$, this finishes case 1. 

\noindent {\bf Case 2}. Suppose there does not exist an element $w\in \F_r$ as in Case 1. For the sake of clarity, let $X^\alpha_w$ denote the set we previously denoted by $X_w$. Because cost($E)<r$, and $\{\alpha(s_i)\}_{i=1}^r$ is a graphing, there exists a non-identity element $w\in \F_r$ such that $\mu(X^\alpha_w)>0$. Observe that if $w'$ is a cyclic conjugate of $w$ then $\mu(X^\alpha_{w'})>0$ since $X^\alpha_{w'}=\alpha(g)X^\alpha_w$ for some $g\in \F_r$.

Observe that $w$ is not a power of $s_1$ because $\alpha(s_1)$ is aperiodic. Suppose $w= t_1\cdots t_n$ is in reduced form. Because no cyclic conjugate of $w$ can be as in Case 1, we must have that $t_j = t_{j+1}$ for all $j$. So $w=s_i^n$ for some $2\le i \le r$ and $n \ne 0$. Without loss of generality we may assume $n$ is the smallest positive integer such that if $w=s_i^n$ then $\mu(X^\alpha_w)>0$. Let $v=(s_1^{-1}s_i)^n$. Observe that $X^{\Phi_i(\alpha)}_v \subset X^\alpha_w$. If these sets are not essentially equal then there is a positive measure set of $x$'s such that $\Phi(\alpha)((s_1^{-1}s_i)^ms_1^{-1})x=x$ for some $0\le m < n$. Suppose this is the case and let $u = (s_1^{-1}s_i)^ms_1^{-1}$. Then $\mu(X^{\Phi_i(\alpha)}_u)>0$. Let $t=s_i^ms_1^{-1}$. Observe that $X^\alpha_t \supset X^{\Phi_i(\alpha)}_u$. So $\mu(X^\alpha_t)>0$ and we are back in Case 1 with $t$ in place of $w$.

On the other hand if $X^{\Phi_i(\alpha)}_v = X^\alpha_w$ (up to a measure zero set) then $\mu(X^{\Phi_i(\alpha)}_v)>0$. By replacing $\alpha$ with $\Phi_i(\alpha)$ and $w$ with $v$ we are back in Case 1.

%After replacing $\alpha$ with $\Phi_i(\alpha)$, it follows that if $v=(s_1^{-1}s_i)^n$ then $\mu(X_v)>0$. So we are now back in Case 1.
\end{proof} 
 
Let $\epsilon>0$ and let $Y \subset X$ be as in Claim 2 (after replacing $\alpha$ with $\Phi_i(\alpha)$ if necessary). Without loss of generality, we may assume that the number $i$ from Claim 2 equals $r$. Let $Y_j$ be the set of all $y \in Y$ such that $\alpha(s_r)_Y^j (y) = y$ and $1\le j <\infty$ is the smallest positive number with this property. So $Y=\cup_{j=1}^\infty Y_j$ is a partition of $Y$. Moreover there exists an $N>0$ such that if $Z= \cup_{j \ge N} Y_j$ then $\mu(Z)<\epsilon$. 

By \cite[Lemma 8.5]{K} there exists a hyperfinite aperiodic subequivalence relation $F<E\upharpoonright Z$ such that $\alpha(s_r)_Y \in [F]$. Let $f_\infty \in [F]$ be a generator. This means that $F=\{ (z, f_\infty^nz):~z \in Z, n \in \Z\}$.

Because $\alpha(s_1)$ is ergodic, $\alpha(s_1)_{X\setminus Y}$ is also ergodic. By Rohlin's Theorem, there exists a countable Borel partition $\gamma$ of $X\setminus Y$ such that if $\cB$ is the smallest $\alpha(s_1)_{X\setminus Y}$-invariant sigma-algebra containing $\gamma$, then $\cB$ is the sigma-algebra of all measurable sets  of $X\setminus Y$ (up to sets of measure zero). 
 
Let $\gamma=\{P_i\}_{i=1}^\infty$ and choose elements $f_i \in [E\upharpoonright P_i]$ such that $f_i$ has period $i+N$. To be precise, we require that for a.e. $x \in P_i$, $f_i^{i+N}x=x$ and if $1\le j < i+N$ then $f_i^jx \ne x$. Such an element $f_i$ exists by \cite[Theorem 3.3]{K}.  

Define $\psi \in [E]$ by
\begin{displaymath}
\psi(x) = \left\{ \begin{array}{cc}
\alpha(s_r)_Y(x) & x \in Y \setminus Z \\
f_\infty(x) & x \in Z \\
f_i(x) & x \in P_i 
\end{array}\right.\end{displaymath}
Define a homomorphism $\beta: \F_r \to [E]$ by $\beta(s_i)=\alpha(s_i)$ if $1\le i < r$ and $\beta(s_r)=\psi$.  We claim that
\begin{enumerate}
\item ${\bf d}_u(\alpha,\beta) := \sum_{i=1}^r d_u(\alpha(s_i), \beta(s_i)) \le 4 \epsilon$; 
\item $\beta \in \Hom'_g(\F_r,[E])$,
\item $\beta \in \Hom_{isom}(\F_r,[E])$.
\end{enumerate}
Observe that because $\alpha \in \Hom'_g(\F_r,[E])$ and $\epsilon>0$ are arbitrary (up to replacing $\alpha$ with $\Phi_i(\alpha)$ for some $i$), this claim implies the theorem.

To see the first statement, note that 
$${\bf d}_u(\alpha,\beta)=d_u(\alpha(s_r),\psi)=\mu(\{x\in X:~\alpha(s_r)x \ne \psi(x)\}).$$
Observe that 
$$\{x\in X:~\alpha(s_r)x \ne \psi(x)\} \subset (X \setminus Y) \cup Z \cup \{x \in Y\setminus Z:~ \alpha(s_r)x \notin Y\setminus Z\}.$$
Since $\alpha(s_r)$ is measure-preserving, 
$$\mu\left(\{x \in Y\setminus Z:~ \alpha(s_r)x \notin Y\setminus Z\}\right) \le \mu(X \setminus (Y \setminus Z)) = \mu(X \setminus Y) + \mu(Z) \le 2\epsilon.$$
This implies (1) above.

% \le \mu(Z) + \mu(\cup_{i=1}^\infty P_i) \le 2\epsilon.$$
 
For the second statement, note that the equivalence relation generated by $\{\beta(s_1),\ldots,\beta(s_r)\}$ contains the graph of $\alpha(s_1),\ldots, \alpha(s_{r-1})$ and $\alpha(s_r)_Y$. The latter is true because $\alpha(s_r)_Y \res Y \setminus Z  = \psi \res Y \setminus Z$ and $f_\infty$ is a generator for the equivalence relation generated by $\alpha(s_r)_Y \res Z$. Because $\{ \alpha(s_1),\ldots, \alpha(s_{r-1}), \alpha(s_r)_Y\}$ is a graphing of $E$ it follows that $\beta$ is a graphing of $E$. So $\beta \in \Hom'_g(\F_r,[E])$.

For $k \ge 1$, let $A_k$ be the set of all subgroups $H \in \Sub(\F_r)$ such that $s_r^k \in H$ and if $1\le j < k$ then $s_r^j \notin H$. Observe that $P_i=(\Stab^\beta)^{-1}(A_{i+N})$ for every $i\ge 1$. Because $X\setminus Y  = \cup_{i=1}^\infty P_i$, it follows that $X\setminus Y$ and $\gamma$ are contained in $(\Stab^\beta)^{-1}(\cF)$ where $\cF$ is the Borel sigma-algebra of $\Sub(\F_r)$. Because $\gamma$ is generating for $\alpha(s_1)_{X\setminus Y}$, it follows that $(\Stab^\beta)^{-1}(\cF)$ contains every Borel subset of $X\setminus Y$ (modulo sets of measure zero). Because $\alpha(s_1)$ is ergodic and $\mu(X\setminus Y)>0$, this implies that $(\Stab^\beta)^{-1}(\cF)$ is the sigma-algebra of all measurable sets of $X$, (up to sets of measure zero). Therefore, $\beta \in \Hom_{isom}(\F_r,[E])$ as claimed.
\end{proof}

The following is an immediate consequence.
\begin{cor}\label{cor:universal}
If $G$ is a free group of rank $r$ and $(X,\mu,E)$ is an ergodic aperiodic discrete pmp equivalence relation with cost(E) $<r$ then there exists an invariant measure $\lambda \in \IRS(G)$ such that $(\Sub(G), \lambda, E_G)$ is isomorphic to $(X,\mu,E)$. Moreover, there is an action $G \cc X$ such that $E=\{(x,gx):~x\in X, g\in G\}$ is the orbit-equivalence relation and if $\Stab:X \to \Sub(G)$ is the stabilizer map $\Stab(x)=\{g\in G:~gx=x\}$ then $\Stab$ is a measure-conjugacy from $G\cc (X,\mu)$ to $G\cc (\Sub(G),\lambda)$.
\end{cor}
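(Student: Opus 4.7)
The plan is to derive the corollary immediately from Theorem \ref{thm:isom}. Since $(X,\mu,E)$ is ergodic, aperiodic and has cost strictly less than $r$, Theorem \ref{thm:isom} supplies a homomorphism $\beta \in \Hom_{isom}(\F_r,[E]) \cap \Hom'_g(\F_r,[E])$, and I would use this $\beta$ to produce everything the corollary asks for.

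First I would interpret $\beta$ as an action: since $\beta$ takes values in $[E]$, the formula $g\cdot x := \beta(g)(x)$ defines a measure-preserving action $\F_r \cc^\beta (X,\mu)$. The condition $\beta \in \Hom'_g(\F_r,[E])$ means that $\{\beta(s_1),\ldots,\beta(s_r)\}$ is a graphing of $E$, which is exactly the statement that the orbit-equivalence relation $\{(x,\beta(g)x):~x\in X,~g\in \F_r\}$ coincides with $E$ up to a null set. This takes care of the second sentence of the corollary.

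Next I would set $\lambda := \mu_\beta = \Stab^\beta_*\mu \in \IRS(\F_r)$. As noted in the paragraph preceding Theorem \ref{thm:isom}, the stabilizer map $\Stab^\beta : X \to \Sub(\F_r)$ is $\F_r$-equivariant, so by definition of $\lambda$ it intertwines the action $\F_r \cc^\beta (X,\mu)$ with the conjugation action $\F_r \cc (\Sub(\F_r),\lambda)$. The hypothesis $\beta \in \Hom_{isom}(\F_r,[E])$ says exactly that $\Stab^\beta$ is a measure-space isomorphism from $(X,\mu)$ to $(\Sub(\F_r),\lambda)$, so combining these two facts gives the measure-conjugacy asserted in the last sentence of the corollary.

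Finally, because $\Stab^\beta$ is both a (mod-null) bijection and $\F_r$-equivariant, it carries the orbit-equivalence relation of $\F_r \cc^\beta (X,\mu)$ onto the orbit-equivalence relation of $\F_r \cc (\Sub(\F_r),\lambda)$. The former equals $E$ up to null sets by the previous paragraph, and the latter is by definition the restriction of $E_G$ to a $\lambda$-conull subset of $\Sub(G)$. Hence $(\Sub(G),\lambda,E_G)$ is isomorphic to $(X,\mu,E)$ as discrete pmp equivalence relations, completing the proof. There is no real obstacle here beyond unpacking the definitions encoded in $\Hom_g$ and $\Hom_{isom}$; all of the work was done in Theorem \ref{thm:isom}.
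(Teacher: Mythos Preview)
Your proposal is correct and is exactly what the paper does: the corollary is stated as ``an immediate consequence'' of Theorem~\ref{thm:isom}, and your write-up simply unpacks that immediacy by reading off the action, the measure $\lambda=\mu_\beta$, and the measure-conjugacy from the definitions of $\Hom'_g$ and $\Hom_{isom}$.
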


\section{Encoding via sub-actions}

%I believe that something like the following is true. Let $A$ be a finite set and consider the action $\F_r \cc A^{\F_r}$. Let us suppose $\nu$ is a probability measure on $A^{\F_r}$ preserved by this action. Then there should exist a conjugation-invariant measure $\lambda$ on $\Sub(\F_r)$ and a rank $r$ subgroup $F' < \F_r$ such that $F' \cc (\Sub(\F_r),\lambda)$ is isomorphic to $\F_r \cc (A^{\F_r}, \nu)$. Similar statement should hold in the topological category, the category of nonsingular actions and possibly it extends to all actions, not just symbolic ones.

\begin{thm}\label{thm:encoding}
Let $\F_r=\langle s_1,\ldots, s_r\rangle$ be the free group of rank $r \ge 2$, $\F'_r<\F_r$ be the (infinite-index) subgroup generated by $\langle s_1^2, s_2, \ldots, s_r\rangle$, $K$ be a finite set,  $\F_r \cc K^{\F_r}$ be the usual action $g x(f):=x(g^{-1}f)$. Let $X \subset K^{\F_r}$ be a closed invariant subspace. Then there exist subspaces $Z \subset Y \subset \Sub(\F_r)$ such that
\begin{enumerate}
%\item $Z \subset Y$,
\item $Z$ is $\F'_r$-invariant, $Y$ is $\F_r$-invariant,
\item there is a finite set $L \subset \F_r$ (depending only on $K$) such that $Y = \bigcup_{f\in L} fZ$,
\item the action $\F_r \cc X$ is topologically conjugate to the conjugation-action $\F'_r \cc Z$. More precisely, if $\phi:\F_r \to \F'_r$ is the isomorphism determined by $\phi(s_1) = s_1^2, \phi(s_i)=s_i$ for $2\le i \le r$ then there is a homeomorphism $\Psi:X \to Z$ such that $\Psi( f x) = \phi(f) \cdot \Psi(x)$ for all $x\in X$ and $f\in \F_r$.
\item If $\eta$ is an $\F_r$-invariant Borel measure on $X$ then there exists an $\F_r$-invariant Borel measure $\lambda$ on $Y$ such that $\Psi_*\eta = \lambda\res Z$ (in particular, $\lambda(Z)>0$). Moreover, if $\eta$ is finite then $\lambda$ is also finite.
\end{enumerate}
\end{thm}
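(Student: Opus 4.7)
Plan: The strategy is to embed $X$ into $\Sub(\F_r)$ by encoding each $x \in X \subset K^{\F_r}$ as the Schreier coset graph of a subgroup $\Psi(x) \le \F_r$. Because $\phi$ sends $s_1$ to $s_1^2$, the $s_1$-edges of the Cayley graph of $\F_r$ will be doubled into length-$2$ $s_1$-paths in $\Sch(\Psi(x), S)$, and the midpoint (together with a few auxiliary vertices) will form a small ``gadget'' encoding the value $x(g)$ at the main vertex $v_g$.

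The first step is to construct, for each letter $k \in K$, a finite rooted labeled digraph $\Delta_k$ with distinguished entry and exit vertices such that: (i) the entry-to-exit $s_1$-path has length $2$, (ii) at every vertex of $\Delta_k$, each generator $s_1, \ldots, s_r$ has exactly one outgoing and one incoming edge (so that the gadget can be plugged into a Schreier coset graph), and (iii) the value $k$ is uniquely recoverable from the local labeled structure. Given $x \in X$, define $\Gamma_x$ by taking the Cayley graph of $\F_r$ and replacing each $s_1$-edge $(v_g, v_{gs_1})$ with a copy of $\Delta_{x(g)}$, with entry at $v_g$ and exit at $v_{gs_1}$; let $\Psi(x)$ be the subgroup whose Schreier coset graph is $\Gamma_x$ rooted at $v_e$, and set $Z := \Psi(X)$. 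A direct check shows $\Psi$ is continuous and injective, hence a homeomorphism onto $Z$ by compactness of $X$.

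The equivariance $\Psi(fx) = \phi(f) \cdot \Psi(x)$ amounts to checking that re-rooting $\Gamma_x$ from $v_e$ to $v_f$ corresponds to conjugation by $\phi(f)$. Indeed, moving across a doubled $s_1$-edge is exactly the effect of $\phi(s_1) = s_1^2$, and for $i \ge 2$ the $s_i$-edges in the Cayley graph are unchanged by the gadget insertion, so $\phi(s_i) = s_i$ suffices; by induction this extends to all of $\F_r$. Consequently $Z$ is $\F'_r$-invariant. For claim (2), observe that every vertex of $\Gamma_x$ falls into one of finitely many ``types'' determined by its position inside or outside the gadgets (independent of $x$); picking one element $g_t \in \F_r$ for each vertex type $t$ so that $v_e \cdot g_t^{-1}$ has type $t$, and setting $L := \{g_t\}$, the union $Y := \bigcup_{f \in L} fZ$ is $\F_r$-invariant and covers all $\F_r$-translates of $Z$.

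Finally, for claim (4), the pushforward $\Psi_*\eta$ is an $\F'_r$-invariant measure on $Z$, which extends to an $\F_r$-invariant measure $\lambda$ on $Y$ via an appropriate linear combination $\lambda = \sum_{f \in L} c_f \cdot f_*(\Psi_*\eta)$ with coefficients chosen so that the $\F_r$-action permutes the pieces $\{fZ\}_{f \in L}$ compatibly and $\lambda \res Z = \Psi_*\eta$; finiteness of $\lambda$ follows from finiteness of $\eta$. The main obstacle throughout is the gadget design in the first step: the joint constraints (Schreier structure, the prescribed $s_1$-path length $2$, unique recovery of $k$, and a uniformly bounded number of vertex types so that $L$ is finite) must be reconciled simultaneously. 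Once a workable gadget is in hand, the remaining verifications are essentially direct, combining the Schreier-to-subgroup correspondence with the definition of the Chabauty topology.
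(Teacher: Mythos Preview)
Your strategy is the same as the paper's: replace each $s_1$-edge of the Cayley graph by a finite gadget encoding the symbol, and read off the resulting Schreier graph. The paper makes this concrete by identifying $K=\{1,\ldots,n\}$, subdividing each edge $(g,gs_1)$ by a single midpoint, and attaching an $s_2$-labeled cycle of length $x(g)$ to that midpoint (with $s_i$-loops, $i\ne 2$, at all new vertices and $s_1$-loops at all new vertices except the midpoint); then $L$ is simply the ball of radius $n$. So the ``main obstacle'' you flag has a short explicit solution.

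One point to correct in step (4): your formula $\lambda=\sum_{f\in L} c_f\, f_*(\Psi_*\eta)$ with scalar coefficients does not produce an $\F_r$-invariant measure in general. The conjugation action of $\F_r$ does \emph{not} permute the pieces $\{fZ\}_{f\in L}$ as a set: which gadget vertex one lands on after applying, say, $s_2$ to a midpoint depends on the local value $x(g)$, and (with the paper's gadget, or any gadget whose size varies with the symbol) the fibers over different points of $Z$ have different cardinalities, so no choice of constants $c_f$ can balance this. The paper instead fixes an ordering $L=\{f_1,\ldots,f_j\}$ with $f_1=e$, defines a retraction $\Upsilon:Y\to Z$ by $\Upsilon(K')=f_iK'$ for the least $i$ with $f_iK'\in Z$, and sets
\[
\lambda(E)=\int_Z |\Upsilon^{-1}(K)\cap E|\, d\Psi_*\eta(K),
\]
which is the correct ``count the preimages'' induction; invariance is then immediate from the $\F'_r$-invariance of $\Psi_*\eta$ and the fact that $\Upsilon$-fibers are transversals to the $\F'_r$-orbits inside each $\F_r$-orbit. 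Your formula could be salvaged if you designed all gadgets to have the same vertex set and the same $s_i$-transition pattern among vertex types (encoding $k$ only through features invisible to the type partition), but you have not argued that such a design exists, and the counting formula above avoids the issue entirely.
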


\begin{proof}
Without loss of generality, $K=\{1,\ldots,n\}$ for some integer $n\ge 1$. Given $x \in K^{\F_r}$, we will define a subgroup $\Psi(x) < \F_r$. The easiest way to understand $\Psi(x)$ is through its Schreier coset graph $\Psi(x) \backslash \F_r$ which is constructed as follows. Start with the Cayley graph of $\F_r$. For every element $g \in \F_r$, subdivide the edge $(g,gs_1)$ and attach a cycle of length $x(g)$ to the new vertex. Every edge of this cycle is labeled $s_2$. For every $3\le i \le r$, place a loop labeled $s_i$ at all the new vertices. Also, place a loop labeled $s_1$ at all the new vertices other than the one subdividing the edge $(g,gs_1)$. An example of $x$ and $\Psi(x) \backslash \F_r$ for $r=2$ is shown in figure \ref{fig:sub-action}.

More formally, $\Psi(x)$ is the subgroup generated by all elements of the form
\begin{enumerate}
\item $gs_1 s_2^t s_1^{-1} g^{-1}$ if $g \in \F'_r$ and $x(\phi^{-1}(g)) = 1$ (for any $t \in \Z$);
\item  $g s_1 s_2^t s^{\pm 1}_i s_2^u s_1^{-1} g^{-1}$ if $g \in \F'_r$ and $x( \phi^{-1}(g)) = k>1$ where $i \ne 2$, $t+u = 0 \mod k$, and $u,t \ne 0 \mod k$;
\item $g s_1 s_i s_1^{-1}g^{-1}$ if $g \in \F'_r$ and $3\le i \le r$.
\end{enumerate}

\begin{figure}[htb]
\begin{center}
\ \psfig{file=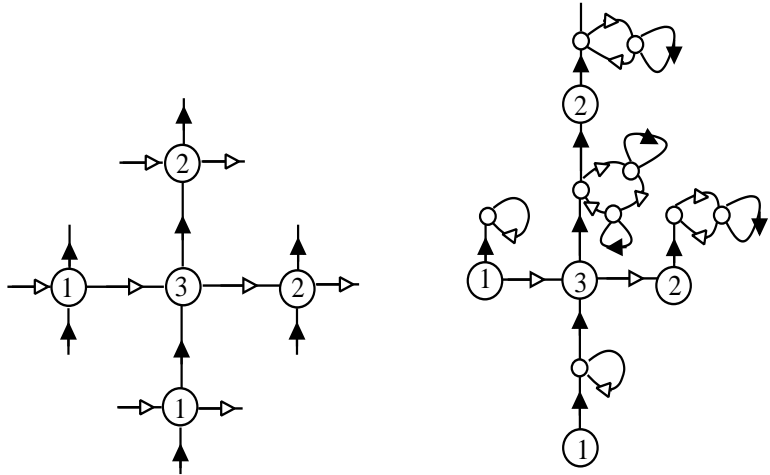,height=2.5in,width=4in}
\caption{The element $x$ is depicted on the left as a function on the vertices of the Cayley graph of $\F_2$. The Schreier coset graph $\Psi(x) \backslash \F_2$ is depicted on the right. Black arrows represent $s_1$ and white arrows represent $s_2$. }
\label{fig:sub-action}
\end{center}
\end{figure}

Clearly, $\Psi( f x) = \phi(f) \cdot \Psi(x)$ for all $x\in X$ and $f\in \F_r$. Note $\Psi:K^{\F_r} \to \Sub(\F_r)$ is a homeomorphism onto its image. Let $Z = \Psi(X)$, so that $\Psi$ gives a homeomorphism from $X$ to $Z$ and let $Y$ be the union of all $\F_r$-orbits in $Z$. From the construction, it is clear that if $L$ is the radius $n$ ball centered at the identity (with respect to the word metric) then $Y = \cup_{f\in L} f \cdot Z$. This, and the fact that $Z$ is closed, implies $Y$ is closed. 

Now suppose $\eta$ is an $\F_r$-invariant Borel measure on $X$. Let $L=\{f_1,\ldots, f_j\}$ be an ordering of $L$ with $f_1=e$. Define $\Upsilon: Y \to Z$ by $\Upsilon( K) =  f_i \cdot K$ where $i$ is the smallest number such that $f_i \cdot K \in Z$. Define a measure $\lambda$ on $Y$ by
$$\lambda(E) = \int |\Upsilon^{-1}(K) \cap E| ~d\Psi_*\eta(K).$$
Clearly, $\lambda\res Z = \Psi_*\eta$ and $\lambda$ is $\F_r$-invariant if and only if $\eta$ is $\F_r$-invariant. Moreover, since $\Upsilon$ is finite-to-1, $\lambda$ is finite if and only if $\eta$ is finite.
\end{proof}

\end{document}